\documentclass{amsart}
\usepackage{amsmath}
\usepackage{amsfonts}
\usepackage{amsthm}
\usepackage{amssymb}
\usepackage{esint}
\usepackage{times}

\setcounter{MaxMatrixCols}{30}
\newtheorem{theorem}{Theorem}
\newtheorem*{theorem*}{Theorem}
\numberwithin{equation}{section}
\numberwithin{theorem}{section}

\newtheorem*{acknowledgement*}{Acknowledgement}
\newtheorem*{definition*}{Definition}

\newtheorem{lemma}[theorem]{Lemma}

\newtheorem{proposition}[theorem]{Proposition}

\newtheorem*{question*}{Question}

\newcommand{\RR}[0]{\mathbb{R}}

\newcommand{\pd}[2]{\frac{\partial #1}{\partial#2}}

\newcommand{\pdt}[0]{\frac{\partial}{\partial t}}

\newcommand{\gt}[0]{\tilde{g}}
  \newcommand{\ft}[0]{\tilde{f}}

\newcommand{\delb}[0]{\overline{\nabla}}
\newcommand{\delt}[0]{\widetilde{\nabla}}

\newcommand{\Rc}[0]{\operatorname{Rc}}
\newcommand{\Rm}[0]{\operatorname{Rm}}

\newcommand{\dfn}[0]{\doteqdot}

\newcommand{\Lc}[0]{\mathcal{L}}

\newcommand{\Ec}[0]{\mathcal{E}}

\newcommand{\Wc}[0]{\mathcal{W}}

\newcommand{\dv}[0]{\operatorname{div}}
\newcommand{\Diff}[0]{\operatorname{Diff}}

\title[The maximal rate of convergence of the Ricci flow]{On the maximal rate of convergence \\ under the Ricci flow}

 \author{Brett Kotschwar}
 \email{kotschwar@asu.edu}
 \address{School of Mathematical and Statistical Sciences,
 	Arizona State University, Tempe, AZ 85287, USA}

  \thanks{The author was partially supported by Simons Foundation grant \#359335.}

\begin{document}
\begin{abstract}
We estimate from above the rate at which a solution to the normalized Ricci flow on a closed manifold may converge to a limit soliton. Our main result
implies that any solution which converges modulo diffeomorphisms to a soliton faster than any fixed exponential rate must itself be self-similar.
\end{abstract}
\maketitle

\section{Introduction} Let $M$ be a compact manifold. We will consider solutions to the
\emph{normalized Ricci flow}
\begin{equation}\label{eq:nrf}
 \pd{g}{t} = -2\left(\Rc(g) + \frac{\sigma}{2}g\right)
\end{equation}
on $M$ for a fixed $\sigma\in \{-1, 0, 1\}$. This equation is homothetically equivalent to the usual Ricci flow: when $\sigma \neq 0$,
the transformations
\[
\tilde{g}(t) = (1 + \sigma t)g\left(\sigma^{-1}\log(1+\sigma t)\right), \quad g(t) = e^{-\sigma t}\gt\left(\sigma^{-1}(e^{\sigma t} -1 )\right),
\]
convert a solution $g(t)$ of \eqref{eq:nrf} into a solution $\tilde{g}(t)$ of the Ricci flow and vice-versa.

On a compact manifold $M$, the fixed points of the normalized Ricci flow modulo the action of $\operatorname{Diff}(M)$ are \emph{gradient Ricci solitons}, satisfying 
\begin{equation}\label{eq:grs}
 \Rc(g) + \nabla\nabla f + \frac{\sigma}{2}g = 0
\end{equation}
for some $f\in C^{\infty}(M)$. In fact, compact steady and expanding solitons are necessarily Einstein \cite{Ivey3DSolitons},  so the function $f$ 
may be taken to be constant
when $\sigma \geq 0$.  It will be convenient nevertheless to use \eqref{eq:grs} to describe all three cases in a unified way.   

In this paper, we will consider the Ricci flow in more-or-less ideal circumstances, in which a solution to the normalized equation \eqref{eq:nrf} exists for all $t\in [0, \infty)$, and converges, modulo diffeomorphisms, to a limit satisfying \eqref{eq:grs}.  In such circumstances, one can ask at what rate the convergence takes place. 
Our main result shows that this rate is at most exponential unless the solution is self-similar.

\begin{theorem}\label{thm:decay}
Let $(M, \bar{g})$ be a  closed Ricci soliton satisfying \eqref{eq:grs} and   $g(t)$ a smooth solution to \eqref{eq:nrf} 
on $M\times [0, \infty)$ for some fixed $\sigma\in \{-1, 0, 1\}$.
Suppose that there is a sequence of times $t_i \to \infty$ and $\phi_i \in \operatorname{Diff}(M)$ such that 
$\phi_i^*g(t_i)\to \bar{g}$ in $C^k_{\bar{g}}$ for all $k \geq 0$. Then, either
 \begin{equation}\label{eq:decay}
 \|\phi^*_ig(t_i) - \bar{g}\|_{C^2_{\bar{g}}} \geq C e^{-mt_i} 
\end{equation}
for some $m > 0$ and $C> 0$ or there exists a smooth family $\Phi_t\in \Diff(M)$ such that $\Phi_t^*g(t) = \bar{g}$ for all $t\geq 0$.
\end{theorem}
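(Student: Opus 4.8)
We prove the contrapositive: if \eqref{eq:decay} holds for \emph{no} pair $m>0$, $C>0$, then there is a smooth family $\Phi_t\in\Diff(M)$ with $\Phi_t^*g(t)=\bar{g}$ for all $t\ge 0$. In that case, after passing to a subsequence we may assume
\[
\|\phi_i^*g(t_i)-\bar{g}\|_{C^2_{\bar{g}}}\le e^{-i t_i},
\]
so that the convergence along $t_i$ is faster than every fixed exponential rate. The plan is to view $g(t)$, after a gauge change by diffeomorphisms that renders $\bar{g}$ stationary, as a solution of a strictly parabolic flow near its fixed point $\bar{g}$; to show that the diffeomorphism-invariant deviation of $g(t)$ from $\bar{g}$ satisfies a closed system of differential inequalities of the schematic form $|(\partial_t-\Delta)w|\le C(|w|+|\nabla w|)$; and to deduce from a Carleman-type unique continuation estimate \emph{at $t=\infty$} that this deviation vanishes identically on $M\times[T,\infty)$ for some $T$.

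\emph{Step 1 (fixing the gauge and bounding the geometry).} For a metric $h$ let $V(h)^k=h^{ij}(\Gamma(h)^k_{ij}-\Gamma(\bar{g})^k_{ij})$ be the DeTurck vector field relative to $\bar{g}$, and set $W(h)=V(h)-\nabla^{\bar{g}}f$. Since $\bar{g}$ satisfies \eqref{eq:grs}, one checks that $-2\Rc(h)-\sigma h+\Lie_{W(h)}h$ vanishes at $h=\bar{g}$, so the ``soliton DeTurck--Ricci flow''
\begin{equation}\label{eq:mrf}
\partial_t\hat{g}=-2\Rc(\hat{g})-\sigma\hat{g}+\Lie_{W(\hat{g})}\hat{g}
\end{equation}
is strictly parabolic and has $\bar{g}$ as a stationary solution, and a standard DeTurck-type family of diffeomorphisms $\psi_t$ converts \eqref{eq:nrf} into \eqref{eq:mrf} via $\hat{g}(t):=\psi_t^*g(t)$. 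Using the assumed $C^k_{\bar{g}}$-convergence together with interior (Shi-type) estimates for \eqref{eq:nrf}, the solutions $\phi_i^*g$ are uniformly smooth near each $t_i$ and converge there to $\bar{g}$; feeding this into a stability analysis of the fixed point $\bar{g}$ of \eqref{eq:mrf} --- in which a nonzero component of $\hat{g}-\bar{g}$ along an unstable direction of the linearization would grow exponentially and hence contradict the recurrence $\phi_i^*g(t_i)\to\bar{g}$ --- one arranges that $\hat{g}$ exists on $M\times[T,\infty)$ for some $T$, stays uniformly equivalent to $\bar{g}$ in $C^\infty_{\bar{g}}$, and that the deviation
\[
w:=\bigl(\hat{g}-\bar{g},\ \nabla^{\hat{g}}-\nabla^{\bar{g}},\ \Rm(\hat{g})-\Rm(\bar{g}),\ \dots\bigr)
\]
(collecting also finitely many covariant derivatives of the curvature and connection differences, and controlled modulo diffeomorphism by $\|\phi_i^*g(t_i)-\bar{g}\|_{C^2_{\bar{g}}}$) satisfies $\sup_M|w|(t_i)\to 0$ faster than every exponential. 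This step is already delicate, since the hypothesis controls $g$ only along the sequence $t_i$.

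\emph{Step 2 (the differential system and the Carleman estimate).} Both $\hat{g}$ and $\bar{g}$ solve \eqref{eq:mrf}, so subtracting the induced evolution equations for metric, connection, and curvature (and their covariant derivatives), commuting derivatives, and using the uniform $C^\infty_{\bar{g}}$-bounds of Step 1 yields, on $M\times[T,\infty)$, a closed system
\begin{equation}\label{eq:diffineq}
\bigl|\partial_t w-\Delta w\bigr|\le C\bigl(|w|+|\nabla w|\bigr),
\end{equation}
all norms and $\Delta=\Delta_{\bar{g}}$ taken with respect to $\bar{g}$. The crux of the proof is a Carleman inequality for $\partial_t-\Delta$ on $M\times[T,\infty)$ with a weight that simultaneously absorbs the superlinear right-hand side of \eqref{eq:diffineq} and is sensitive only to the values of $w$ at the times $t_i$; it implies that any $w$ satisfying \eqref{eq:diffineq} and decaying faster than every exponential along some sequence $t_i\to\infty$ must vanish identically on $M\times[T,\infty)$. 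I expect this Carleman estimate --- which is precisely what quantifies ``the maximal rate of convergence is exponential'' --- to be the principal obstacle.

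\emph{Step 3 (conclusion).} By Step 2, $w\equiv 0$ on $M\times[T,\infty)$; in particular $\hat{g}\equiv\bar{g}$ there. Since $\hat{g}$ and the constant solution $\bar{g}$ of the strictly parabolic flow \eqref{eq:mrf} agree at time $T$, backward uniqueness for \eqref{eq:mrf} --- equivalently, for the system \eqref{eq:diffineq} on $M\times[0,T]$ with $w(\cdot,T)=0$ --- gives $\hat{g}\equiv\bar{g}$ on $M\times[0,\infty)$. Then $g(t)=(\psi_t^{-1})^*\bar{g}$ for every $t\ge 0$, so $\Phi_t:=\psi_t^{-1}$ is a smooth family in $\Diff(M)$ with $\Phi_t^*g(t)=\bar{g}$, i.e.\ $g(t)$ is self-similar. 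This shows that if \eqref{eq:decay} holds for no $m,C$ then $g(t)$ is self-similar, which is the dichotomy asserted by the theorem.
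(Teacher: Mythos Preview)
Your proposal has a genuine gap in Step~1 that is not merely ``delicate'' but is in fact the heart of the problem, and your outline does not resolve it.

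The quantity $w$ you construct is \emph{not} diffeomorphism-invariant: it measures the deviation of $\hat g(t)=\psi_t^*g(t)$ from $\bar g$ in the specific DeTurck gauge $\psi_t$, whereas the hypothesis controls $\phi_i^*g(t_i)-\bar g$ for completely unrelated diffeomorphisms $\phi_i$. There is no reason $\psi_{t_i}$ should be close to $\phi_i$, so the super-exponential decay of $\|\phi_i^*g(t_i)-\bar g\|$ does not transfer to $w(t_i)$. Your parenthetical claim that $w$ is ``controlled modulo diffeomorphism'' by $\|\phi_i^*g(t_i)-\bar g\|_{C^2}$ is precisely what needs to be proved and is in general false for gauge-dependent quantities. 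The stability sketch you offer (``an unstable mode would grow and contradict the recurrence $\phi_i^*g(t_i)\to\bar g$'') fails for the same reason: growth of $\hat g-\bar g$ in the DeTurck gauge is perfectly compatible with $g(t_i)$ remaining close to $\bar g$ modulo some other diffeomorphism. Generic solitons are not dynamically stable for \eqref{eq:mrf}, so there is no reason to expect $\hat g$ to remain near $\bar g$ at all, let alone for all $t\ge T$.

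The paper resolves this gauge problem by an entirely different route. It works with the entropy $\mu_\sigma(g)$, which is diffeomorphism-invariant and monotone along \eqref{eq:nrf}; the hypothesis then immediately yields super-exponential decay of the scalar $\mu_\sigma(\bar g)-\mu_\sigma(g(t_i))$ with no gauge ambiguity. The \L{}ojasiewicz--Simon inequalities of \cite{SunWang,HaslhoferMueller,Kroencke} (together with the a priori bound $\mu_\sigma(g(t))\le\mu_\sigma(\bar g)$) are what furnish long-time existence and smooth convergence of the gradient flow of $\mu_\sigma$ starting near $\bar g$; this is Proposition~\ref{prop:modflow} and replaces your Step~1. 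The analog of your Carleman estimate is a differential inequality for the Dirichlet--Einstein quotient $N=F/E$ (Proposition~\ref{prop:nest}), which yields the lower bound $\mu_\sigma(\bar g)-\mu_\sigma(g(t))\ge C_0\|\nabla\mu_\sigma(g(a))\|^2 e^{-N_0 t}$. Comparing this with the super-exponential decay along $t_i$ forces $\nabla\mu_\sigma(g(a))=0$, so $g(a)$ is already a soliton; backward uniqueness then finishes. The key point is that the entropy both absorbs the diffeomorphism freedom and supplies the monotonicity needed to pass from the sequence $t_i$ to all $t$, neither of which your DeTurck framework provides.
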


Of course, associated to each soliton $(M, \bar{g}, \bar{f}, \sigma)$ satisfying \eqref{eq:grs}, there is a self-similar solution $g(t) = \psi_t^*\bar{g}$ to \eqref{eq:nrf} defined for $t \in [0, \infty)$
where $\psi_t$ is the one-parameter family of diffeomorphisms generated by $\bar{\nabla}\bar{f}$. Taking $\phi_t = \psi_t^{-1}\circ \theta_t$
for different families of diffeomorphisms $\theta_t$ converging to an isometry of $\bar{g}$, we may prescribe the rate at which $\phi_t^*g(t) = \theta_t^*\bar{g}$ converges to $\bar{g}$ (or, indeed, arrange for it to coincide with $\bar{g}$ for all $t$). The theorem asserts that sequential convergence at a super-exponential rate can only occur
in this way, that is, for a solution moving exclusively within a fixed  $\operatorname{Diff}(M)$-orbit of $\bar{g}$. 

At the same time, there may be solutions to \eqref{eq:nrf} which converge to a general soliton $\bar{g}$
at arbitrarily high (but fixed) exponential rates, so the dichotomy asserted by the theorem is optimal in a sense.

Under the correspondence between the normalized and unnormalized Ricci flows described above, 
a maximal solution $\gt(t)$ to the unnormalized Ricci flow  on $M\times [0, 1)$ satisfying the Type-I curvature and diameter bounds
\[
(1-t)|\Rm(\gt(t))| \leq C, \quad \operatorname{diam}(M, \gt(t)) \leq C\sqrt{1-t},
\]
corresponds to an immortal solution $g(t)$ to \eqref{eq:nrf} with $\sigma = -1$ satisfying the uniform bounds $|\Rm(g(t))| \leq C$ and $\operatorname{diam}(M, g(t)) \leq C$ for $t\geq 0$. For such a solution $g(t)$, \v{S}e\v{s}um \cite{Sesum} proved that for any sequence $t_i \to \infty$, there is a sequence $\phi_i$ of diffeomorphisms such that $\phi_i^*g(t_i)$ subconverges to a shrinking soliton $\bar{g}$. Provided at least one of the limit solitons $\bar{g}$ is integrable, she proved 
that it is unique,  up to diffeomorphisms, and that there is a smooth family of diffeomorphisms $\psi_t$ for which $\psi_t^*g(t)$ converges smoothly to $\bar{g}$ as $t\to \infty$ at a rate that is \emph{at least} exponential.  Using an approach of Sun-Wang \cite{SunWang}, the integrability condition was later removed by Ache \cite{Ache}, however, the convergence in the general case is only guaranteed to occur at a polynomial rate. (It is expected that there are solutions to \eqref{eq:nrf} which converge at precisely polynomial rates; Carlotto-Chodosh-Rubinstein \cite{CarlottoChodoshRubinstein}, for example, have constructed such ``slowly-converging'' solutions for the Yamabe flow.)

The analogs of Theorem \ref{thm:decay} for parabolic equations are classical \cite{AgmonNirenberg}, \cite{CohenLees}. For the linear heat equation, there is 
a particularly elementary proof. Given a solution $u:M\times[0, \infty)\to \RR$ to $\pd{u}{t} = \Delta u$ on a compact manifold $(M, g)$, a short computation shows that the $L^2$-norm $E(t) = \int_M u^2(x, t)\, dV_g$ is log-convex in $t$. This implies  
that 
\[
            E(t) \geq E(0) e^{-N_0t},  \quad N_0 = \log(E(0)/E(1))
\]
for $t\geq 0$. The work which follows was originally motivated by a desire to find a comparably direct and effective argument for solutions of \eqref{eq:nrf}, where the situation is complicated by the nonlinearity of the equation and the degeneracy induced by its invariance under the diffeomorphism group. The problem is to parlay the sequential convergence in the hypotheses into smooth convergence relative to some gauge in which the rate of convergence can be conveniently measured and is comparable to the rate of convergence of the original solution.

Our approach is to reduce the problem to that of smooth convergence under the gradient flow of an entropy functional $\mu_{\sigma}$ which encodes normalized, fixed-scale versions of
Perelman's $\mu$- and $\lambda$-functionals and the expander entropy $\mu_{+}$ introduced in \cite{FeldmanIlmanenNi}. Our argument in fact shows that under the assumptions
of Theorem \ref{thm:decay}, for $a \gg 0$, there are (reasonably effective) constants $C_0$ and $N_0$ such that
\[
        \mu_{\sigma}(\bar{g}) - \mu_{\sigma}(g(t)) \geq C_0\|\nabla\mu_{\sigma}g(a)\|^2_{g(a)}e^{-N_0t}
\]
for all $t\geq a$. Here $\|\cdot\|_{g(a)}$ denotes a weighted $L^2$-norm; we will give precise definitions shortly for $\mu_{\sigma}$ and $\|\cdot\|_{g(a)}$ in Sections \ref{sec:modflow} and \ref{sec:evcomp} below.

For the reduction to the gradient flow, we use the \L{}ojasiewicz-Simon inequalities and associated methods developed by Sun-Wang \cite{SunWang} (cf. \cite{Ache}) for the case $\sigma = -1$, Haslhofer-M\"uller \cite{HaslhoferMueller} (cf. \cite{Haslhofer}) for the case $\sigma = 0$, and Kr\"oncke \cite{Kroencke} for the case $\sigma = 1$. The approaches in \cite{Ache, HaslhoferMueller, Kroencke} in turn rely on the general \L{}ojasiewicz-Simon inequality in \cite{ColdingMinicozzi}.

To estimate the rate of convergence of these gradient flows, we derive a differential inequality for the Dirichlet quotient associated to the weighted Einstein operator along the flow, modifying the classical technique of Agmon-Nirenberg \cite{AgmonNirenberg} to fit our situation. This portion of the argument relies on an analysis of the entropy functional $\mu_{\sigma}$ at the level of its 
third variation. From an upper bound on this Dirichlet-Einstein quotient, we obtain a lower bound on the weighted $L^2$-norm of the gradient $\nabla \mu_{\sigma}$ of $\mu_{\sigma}$ along the flow
and a corresponding bound for the entropy $\mu_{\sigma}$ itself. When $(M, \bar{g})$ is Einstein and one assumes smooth convergence in place of sequential convergence
in Theorem \ref{thm:decay}, one can avoid the passage through the modified flow and work directly with \eqref{eq:nrf}. See the remarks in Section \ref{ssec:einsteincase}.

 The conclusion of Theorem \ref{thm:decay} is related in some respects to the unique continuation result in \cite{KotschwarWangCylindrical}, which asserts that any shrinking soliton on an end $\Ec\subset S^k\times \RR^{n-k}$  which agrees to infinite order at spatial infinity with the standard cylindrical metric must coincide with the cylinder (or with a quotient thereof). The parabolic interpretation of this result is that any shrinking self-similar solution to the unnormalized Ricci flow
 defined on $\Ec \times [0, 1)$ which agrees to infinite order at spatial infinity and near the singular time $t=1$ with the shrinking cylindrical solution
 must be identical (up to a quotient) to that solution.  In the edge-case $k=n$ (which is not addressed in \cite{KotschwarWangCylindrical}), dropping the assumption of self-similarity, the question is whether a maximal solution $g(t)$ to the unnormalized Ricci flow $g(t)$ on $S^n\times [0, 1)$ which agrees to infinite order with the standard shrinking sphere
 $\mathring{g}(t) = 2(n-1)(1-t)g_{S^n}$ at the singular time (in the sense that
 $\|g(t) - \mathring{g}(t)\|_{C^k} = O((1-t)^m)$ for all 
 $m$ and all $k$ sufficiently large) must coincide with $\mathring{g}(t)$. Theorem \ref{thm:decay} implies in particular that it must.

In this direction, Strehlke \cite{StrehlkeUC} has recently proven (by an alternative approach) that a closed convex solution
to the normalized mean-curvature flow cannot converge faster than exponentially to the round sphere unless it coincides with the sphere. His result has applications to the problem of the regularity
of the arrival time function near an isolated critical point \cite{Strehlke2}.

\section{Entropy and the modified flow}\label{sec:modflow}
Let $M$ be a closed manifold and let $\mathcal{R}(M)$ denote the set of Riemannian metrics on $M$.
Define $\Wc_{\sigma}:\mathcal{R}(M) \times C^{\infty}(M)  \longrightarrow \RR$
by
\[
 \Wc_{\sigma}(g, f) = \int_M\left(R + |\nabla f|^2 -\sigma f \right)e^{-f}\,dV_g.
\]
When $\sigma = -1$, $\Wc_{\sigma}(g, f)$ is a normalized version of Perelman's entropy $\Wc(g, f, 1)$ \cite{Perelman1} 
taken at the fixed scale $\tau =1$, and, when $\sigma= 1$, it is a normalized version of the analogous expander entropy $\Wc_{+}(g, f, 1)$
introduced in \cite{FeldmanIlmanenNi}. When $\sigma = 0$, $\Wc_{\sigma}(g, f)$ is Perelman's $\mathcal{F}$-energy. 

As in \cite{Perelman1}, \cite{FeldmanIlmanenNi}, we define
\[  
 \mu_{\sigma}(g) = \inf \bigg\{\,\Wc_{\sigma}(g, f)\,\bigg|\,f\in C^{\infty}(M), \ \int_M e^{-f}\,dV_g = 1\,  \bigg\}.
\]
For fixed $g$, $\mu_{\sigma}(g)$ is finite and is achieved by a smooth minimizer $f$  satisfying the equation
\begin{equation}\label{eq:feq}
 2\Delta f - |\nabla f|^2 + R -\sigma f  = \mu_{\sigma}(g).
\end{equation}
Note that $\mu_0$ is Perelman's $\lambda$-functional, while $\mu_{-1}$ and $\mu_1$ are, respectively, normalized versions 
of Perelman's $\mu$-entropy \cite{Perelman1} and its expanding counterpart $\mu_{+}$ defined in \cite{FeldmanIlmanenNi}.
As a map $\mu_{\sigma}:\mathcal{R}(M)\longrightarrow \RR$, $\mu_{\sigma}$ is diffeomorphism-invariant. Along a solution $g(t)$ to \eqref{eq:nrf},
$\mu_{\sigma}(g(t))$ is monotone-increasing
and is constant precisely when $g(t)$ is self-similar. 

It is shown in \cite{HaslhoferMueller, Kroencke, SunWang} that, for metrics $g$ sufficiently close to a soliton $\bar{g}$, the minimizer $f = f_g$ is unique and depends analytically on $g$. (The minimizer $f_g$ is always unique when $\sigma = 0$, $1$, but need not be if $\sigma = -1$.)  At such metrics, the $L^2(e^{-f}dV_g)$-gradient of $\mu_{\sigma}$ 
is given by
\[
 \nabla \mu_{\sigma}(g) = - \left(\Rc(g) + \nabla\nabla f_{g} + \frac{\sigma}{2}g\right).
\]

We wish to convert Theorem \ref{thm:decay} into a question of the maximal rate of smooth convergence under the gradient flow of $\mu_{\sigma}$, that is,
 \begin{equation}\label{eq:mrf}
     \pd{g}{t} = -2\left(\Rc(g) +\nabla\nabla f_{g} + \frac{\sigma}{2}g\right).
 \end{equation}
We will call \eqref{eq:mrf} the \emph{modified Ricci flow}. Along a solution to \eqref{eq:mrf},
\begin{equation}\label{eq:muev}
 \frac{d}{dt}\mu_{\sigma}(g(t)) = 2\int_M \left| \Rc(g(t)) + \nabla\nabla f_{g(t)} + \frac{\sigma}{2} g(t)\right|^2_{g(t)}e^{-f_{g(t)}}\,dV_{g(t)}
\end{equation}

The well-posedness and long-time existence of this equation are troublesome matters in general, but for our purposes, we will need only to work in the vicinity of a soliton, where a solution to \eqref{eq:nrf} can be transformed into a local solution to \eqref{eq:mrf} by pull-back by a family of diffeomorphisms.
The facts we need are contained in
Lemmas 2.2 and Lemma 3.1 of \cite{SunWang} (cf. \cite{Ache}) for the case $\sigma =-1$, Theorem 3 of \cite{HaslhoferMueller} for the case $\sigma = 0$, and Lemma 6.1 and Theorem 6.2 of \cite{Kroencke} for the case $\sigma = 1$. We collect these facts in the theorem below. 
\begin{theorem}[\cite{HaslhoferMueller, Kroencke, SunWang}]
\label{thm:ls}
 Let $k\geq 2$ and $\alpha \in (0, 1)$. Suppose $(M, \bar{g})$ satisfies \eqref{eq:grs}. Then there is a $C_{\bar{g}}^{k, \alpha}$ neighborhood $U$ of $\bar{g}$ 
 in $\mathcal{R}(M)$
 such that, for all $g\in U$:
 \begin{enumerate}
  \item[(a)] There is a unique $f_g\in C^{\infty}(M)$ satisfying 
  \[
    \mu_{\sigma}(g) = \Wc_{\sigma}(g, f_g), \quad \int_M e^{-f_g}\,dV_g = 1,
\]
and the map $P: U \to C^{k, \alpha}(M)$ with $P(g) = f_g$ is analytic.

\item[(b)] There are constants $C_L$ and $\theta \in [1/2, 1)$ such that
  \begin{equation}\label{eq:lsineq1}
    |\mu_{\sigma}(\bar{g}) - \mu_{\sigma}(g)|^{\theta} \leq C_L \|\Rc(g)+ \nabla\nabla f_g + (\sigma/2)g\|_{L^2(e^{-f_g}dV_g)}.
  \end{equation}
 \end{enumerate}
\end{theorem}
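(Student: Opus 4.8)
Since Theorem~\ref{thm:ls} compiles facts already established in \cite{SunWang} (case $\sigma=-1$), \cite{HaslhoferMueller} (case $\sigma=0$), and \cite{Kroencke} (case $\sigma=1$), the plan is to indicate how I would reconstruct each of the two parts and to flag the one genuinely delicate point. Throughout, impose the normalization $\int_M e^{-f}\,dV_g = 1$ and let $\bar{f}$ be the minimizer attached to $\bar{g}$. The classical soliton identity --- that $R_{\bar{g}} + |\nabla\bar{f}|^2 + \sigma\bar{f}$ is constant --- together with the trace of \eqref{eq:grs}, shows that $\bar{f}$, shifted by an additive constant to respect the normalization, solves \eqref{eq:feq} at $g=\bar{g}$ and realizes $\mu_{\sigma}(\bar{g})$; this pins down the base point for the implicit function theorem.

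For part (a), I would apply the analytic implicit function theorem to the Euler--Lagrange system, treating $\mu = \mu_{\sigma}(g)$ as an auxiliary unknown: solve, for $(f,\mu)$ given $g$, the pair of equations $2\Delta_g f - |\nabla f|^2_g + R_g - \sigma f = \mu$ and $\int_M e^{-f}\,dV_g = 1$. Both sides depend real-analytically on $(g,f)\in C_{\bar{g}}^{k,\alpha}\times C^{k,\alpha}$, so the real work is to check that the linearization in $(f,\mu)$ at $(\bar{g},\bar{f},\mu_{\sigma}(\bar{g}))$ is an isomorphism onto the appropriate H\"older spaces. Modulo lower-order terms this is the non-degeneracy of $\bar{f}$ as a constrained critical point of $\Wc_{\sigma}(\bar{g},\cdot)$ --- equivalently, the invertibility of $2\Delta_{\bar{f}} - \sigma$ on $\{\,h : \int_M h\,e^{-\bar{f}}\,dV_{\bar{g}} = 0\,\}$, where $\Delta_{\bar{f}}$ is the drift Laplacian of $(\bar{g},\bar{f})$ --- which is immediate for $\sigma = 0, 1$ from the sign of the relevant spectrum and is exactly the soliton non-degeneracy established in \cite{SunWang} (cf.\ \cite{Ache}) when $\sigma = -1$. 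The theorem then produces a unique real-analytic branch $P\colon g\mapsto f_g$ on a $C_{\bar{g}}^{k,\alpha}$-neighborhood $U$ of $\bar{g}$ with $P(\bar{g}) = \bar{f}$; local uniqueness of the critical point identifies $f_g$ with the minimizer, and elliptic regularity promotes it to $C^{\infty}(M)$. For $\sigma = 0, 1$ the minimizer is globally unique, so $U$ need not be shrunk; for $\sigma = -1$ only the local statement holds, which is all that is needed. This recovers \cite[Lemma~2.2]{SunWang}, \cite[Theorem~3]{HaslhoferMueller}, and \cite[Lemma~6.1]{Kroencke}.

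For part (b), I would invoke the abstract \L{}ojasiewicz--Simon inequality of \cite{ColdingMinicozzi}. By part (a), $g\mapsto\mu_{\sigma}(g) = \Wc_{\sigma}(g,f_g)$ is real-analytic on $U$, and its weighted $L^2$-gradient $\nabla\mu_{\sigma}(g) = -(\Rc(g) + \nabla\nabla f_g + (\sigma/2)g)$ is a real-analytic map of $U$ into the $C^{k-2,\alpha}$ symmetric $2$-tensors; the one remaining hypothesis is that the Hessian $L\dfn D(\nabla\mu_{\sigma})|_{\bar{g}}$ be Fredholm in the relevant functional-analytic setup. I expect this to be the principal obstacle. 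Because $\mu_{\sigma}$ is $\Diff(M)$-invariant, $L$ is \emph{not} elliptic: its kernel contains the infinite-dimensional space $\{\Lie_X\bar{g}\}$ and its principal symbol degenerates along the Bianchi directions. The resolution, carried out in each cited work, is to fix a gauge --- restrict $L$ to a slice transverse to the $\Diff(M)$-orbit of $\bar{g}$ (the transverse-traceless tensors, extended by the line $\RR\bar{g}$), on which, by the second-variation computations underlying these entropies, $L$ agrees with an elliptic formally self-adjoint operator (the weighted Einstein operator, incorporating the variation $DP(\bar{g})$ of the minimizer) up to a finite-rank correction, hence is Fredholm --- and then to exploit the facts that $\mu_{\sigma}$ is constant along gauge orbits and that $\nabla\mu_{\sigma}$ transforms equivariantly with unchanged weighted $L^2$-norm, so that the inequality obtained on the slice propagates to a full $C_{\bar{g}}^{k,\alpha}$-neighborhood of $\bar{g}$. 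The abstract theorem then yields constants $\theta\in[1/2,1)$ and $C_L$ for which \eqref{eq:lsineq1} holds on a possibly smaller $U$; the exponent falls in $[1/2,1)$ precisely because the right-hand side is already an $L^2$-norm of the gradient, reducing to $1/2$ exactly when $\bar{g}$ is non-degenerate transverse to its $\Diff(M)$-orbit. This is \cite[Lemma~3.1]{SunWang} for $\sigma = -1$, \cite[Theorem~3]{HaslhoferMueller} for $\sigma = 0$, and \cite[Theorem~6.2]{Kroencke} for $\sigma = 1$.
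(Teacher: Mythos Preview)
The paper does not supply its own proof of Theorem~\ref{thm:ls}; it is stated as a citation result from \cite{HaslhoferMueller, Kroencke, SunWang}, followed only by a short remark explaining that, in the shrinking case, the regularity range $k\geq 2$ (rather than $k\gg 1$) can be obtained by replacing the fourth-order implicit-function-theorem argument of \cite[Lemma~2.2]{SunWang} with the second-order one of \cite[Lemma~6.1]{Kroencke}, and by using the Colding--Minicozzi route to the \L{}ojasiewicz inequality as in \cite[Appendix~A]{Ache}. Your proposal is a faithful and correct reconstruction of precisely this approach: you apply the analytic implicit function theorem to the second-order Euler--Lagrange equation \eqref{eq:feq} (checking invertibility of $2\Delta_{\bar f}-\sigma$ on the mean-zero subspace, which for $\sigma=-1$ is the eigenvalue bound $\lambda_{\bar g}>1/2$ also used later in Lemma~\ref{lem:eigest}), and then invoke the abstract \L{}ojasiewicz--Simon inequality of \cite{ColdingMinicozzi} after verifying the Fredholm property of the second variation via a gauge-fixing slice. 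There is nothing to correct; your sketch matches both the paper's remark and the arguments in the cited references.
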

In the shrinking case ($\sigma=-1$), the range of $k$ stated above differs slightly from that in \cite{SunWang}. To 
obtain the above statement for $k\geq 2$ (as opposed to $k \gg 1$) one may replace the implicit function theorem argument in Lemma 2.2 in \cite{SunWang} with an the analog of the argument in Lemma 6.1 of \cite{Kroencke}, which is based on a second-order rather than fourth-order operator, and use the alternative proof of the \L{}ojasiewicz inequality via \cite{ColdingMinicozzi}  in Appendix A of the paper of Ache \cite{Ache}. The improvement in regularity is not essential for our purposes, however.

Following \cite{Ache, SunWang}, we will call neighborhoods $U$ of the kind guaranteed by the theorem \emph{regular} neighborhoods and metrics \emph{regular}
if they belong to a regular neighborhood.

\section{Some preliminaries on the modified flow}\label{sec:evcomp}
In this section, we will derive some fundamental identities for solutions to \eqref{eq:mrf}. We will first need to fix some notation.
\subsection{Identities on weighted Riemannian manifolds} Recall that, for $f\in C^{\infty}(M)$, 
the weighted divergence operator $\dv_f: C^{\infty}(S^2T^*M) \to C^{\infty}(T^*M)$  and its $L^2(e^{-f}dV_g)$-adjoint $\dv_f^*: C^{\infty}(T^*M) \to C^{\infty}(S^2T^*M)$ are defined by
\begin{align*}
    \dv_f W &= e^{f}\dv(e^{-f}W) = \dv W - W(\nabla f, \cdot), \quad (\dv_f^*X) = \dv^* X = -\Lc_{X^{\sharp}}g,
\end{align*}
that is,
\begin{align*}
   (\dv_f W)_j = \nabla_i W_{ij} - W_{ij}\nabla_i f, \quad  (\dv_f^*X)_{ij} = -\frac{1}{2}(\nabla_i X_j + \nabla_j X_i),
\end{align*}
and the $f$-Laplacian and $f$-Einstein operators
\[
\Delta_f: C^{\infty}(T^k(T^*M)) \to C^{\infty}(T^k(T^*M)), \quad \Box_f: C^{\infty}(S^2T^*M)\to C^{\infty}(S^2T^*M),
\]
act on smooth tensor fields $W$ by
\[
    \Delta_f W = \Delta W - \nabla_{\nabla f}W, \quad \Box_f W = \Delta_f W + 2\Rm(W).
\]
Here, $\Rm(W)_{ij} = R_{iklj}W_{kl}$; we will also at times write $\Rm(U, W) = \langle \Rm(U), W\rangle$. 
We further define 
\begin{align}
    \label{eq:sdef}
    S^{\sigma}(g, f) &= \Rc(g) + \nabla\nabla f + \frac{\sigma}{2}g, \\
    \label{eq:mdef}
    M^{\sigma}(g, f) &=  2\Delta f - |\nabla f|^2 + R -\sigma f,
\end{align}
for $g\in \mathcal{R}(M)$. 

When $g$ is a soliton with potential $f$ satisfying \eqref{eq:grs}, $S^{\sigma}(g, f) = 0$ and $M^{\sigma}(g, f) = \mathrm{const}$. When $f$ is an entropy minimizer for $g$, satisfying $\int_M e^{-f}\,dV_g = 1$
and $\Wc_{\sigma}(g, f) = \mu_{\sigma}(g)$,  we have $\nabla \mu_{\sigma}(g) = -S^{\sigma}(g, f)$ and $M^{\sigma}(g, f) = \mu_{\sigma}(g)$.
In general, $S^{\sigma}$ and $M^{\sigma}$ are related by the following weighted Bianchi-type identity. 
\begin{lemma}\label{lem:divfv}
For all $g\in \mathcal{R}(M)$ and $f\in C^{\infty}(M)$,
\begin{equation}\label{eq:divfv}
    \dv_fS^{\sigma}(g, f) = \frac{1}{2}\nabla M^{\sigma}(g, f).
\end{equation}
\end{lemma}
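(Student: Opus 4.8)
The plan is to prove the identity by a direct computation, decomposing $S^{\sigma}$ into its three constituent pieces and applying the relevant Bianchi-type identities termwise. Observe first that the $\sigma$-dependent terms are harmless: $\dv_f\!\left(\tfrac{\sigma}{2}g\right) = \tfrac{\sigma}{2}\dv_f g = -\tfrac{\sigma}{2}\nabla f$ (since $\dv g = 0$ and $g(\nabla f,\cdot) = \nabla f$ as a one-form), while on the right $\tfrac12\nabla(-\sigma f) = -\tfrac{\sigma}{2}\nabla f$, so these match. Hence it suffices to show
\[
    \dv_f\!\left(\Rc(g) + \nabla\nabla f\right) = \frac12\nabla\!\left(2\Delta f - |\nabla f|^2 + R\right).
\]

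For the left-hand side, write $\dv_f W = \dv W - W(\nabla f,\cdot)$ and handle $W = \Rc$ and $W = \nabla\nabla f$ separately. For $W=\Rc$: the contracted second Bianchi identity gives $(\dv\Rc)_j = \nabla_i R_{ij} = \tfrac12\nabla_j R$, and the correction term contributes $-R_{ij}\nabla_i f$. For $W = \nabla\nabla f$: commuting derivatives, $\nabla_i\nabla_i\nabla_j f = \nabla_j\Delta f + R_{ij}\nabla_i f$ (the Ricci term appearing exactly once, from the Bochner-type commutation $[\nabla_i,\nabla_j]\nabla_i f = R_{ij}\nabla_i f$), so $(\dv\nabla\nabla f)_j = \nabla_j\Delta f + R_{ij}\nabla_i f$; the correction term is $-(\nabla_i\nabla_j f)\nabla_i f = -\tfrac12\nabla_j|\nabla f|^2$. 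Adding the four pieces, the two $R_{ij}\nabla_i f$ terms cancel and what remains is $\tfrac12\nabla_j R + \nabla_j\Delta f - \tfrac12\nabla_j|\nabla f|^2$, which is precisely $\tfrac12\nabla_j\!\left(2\Delta f - |\nabla f|^2 + R\right)$, as required.

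The only step requiring genuine care is the curvature bookkeeping in the $\nabla\nabla f$ term: one must verify that the Ricci contribution from commuting the two covariant derivatives matches (with the correct sign) the Ricci contribution from the weight correction in $\Rc$, so that they cancel. This is a one-line commutator computation once the index conventions are fixed, and no subtlety beyond sign-tracking arises. Everything else is a routine contraction of the second Bianchi identity and the definition of $\dv_f$, so the proof is short.
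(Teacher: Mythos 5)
Your proof is correct and follows essentially the same route as the paper's: decompose $S^{\sigma}$ into $\Rc$, $\nabla\nabla f$, and $\tfrac{\sigma}{2}g$, apply the contracted second Bianchi identity and the Bochner commutation $\Delta\nabla_j f = \nabla_j\Delta f + R_{ij}\nabla_i f$, and note the cancellation of the two $R_{ij}\nabla_i f$ terms.
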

\begin{proof}
Note that
\[
    (\dv_f \Rc)_j = \nabla_i R_{ij} - \nabla_i f R_{ij} = \frac{\nabla_jR}{2}  - \nabla_i f R_{ij},
\]
and
\begin{align*}
 (\dv_f \nabla\nabla f)_j &= \Delta \nabla_j f - \nabla_i f\nabla_i\nabla_j f = \nabla_i f R_{ij} + \nabla_j\left(\Delta f - \frac{|\nabla f|^2}{2}\right),
\end{align*}
while
\begin{equation*}
 \dv_f((\sigma/2) g)_j = -(\sigma/2)\nabla_j f.
\end{equation*}
Summing these identities yields \eqref{eq:divfv}. 
\end{proof}

The following commutator identity is key to the estimate of the Dirichlet quotient in the next section. The proof is a straightforward but somewhat lengthy calculation.
\begin{lemma}\label{lem:commidentity}
 Suppose $f\in C^{\infty}(M)$. Then $S^{\sigma} = S^{\sigma}(g, f)$
satisfies
 \begin{align}\label{eq:commidentity}
  \begin{split}
  \dv_f \Box_f S^{\sigma} &= \Delta_f \dv_f S^{\sigma} -\frac{\sigma}{2}\dv_f S^{\sigma} + \frac{1}{2}\nabla|S^{\sigma}|^2 + \frac{1}{2}S^{\sigma}(\nabla M^{\sigma}, \cdot).
  \end{split}
 \end{align}
 In particular, 
 \begin{equation}\label{eq:specvident}
 \dv_{f} \Box_{f} S^{\sigma} = \frac{1}{2}\nabla|S^{\sigma}|^2
\end{equation}
if $M^{\sigma}(g, f)$ is constant.
\end{lemma}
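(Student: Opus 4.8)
The plan is to establish \eqref{eq:commidentity} by a direct computation and to obtain \eqref{eq:specvident} as an immediate consequence. Write $S = S^{\sigma}$, $M = M^{\sigma}$, and $h = \nabla\nabla f$, so $S = \Rc + h + \tfrac{\sigma}{2}g$. Since $\Box_f S = \Delta_f S + 2\Rm(S)$ and $\Delta_f = \Delta - \nabla_{\nabla f}$,
\[
\dv_f \Box_f S = \dv_f\Delta S - \dv_f\bigl(\nabla_{\nabla f}S\bigr) + 2\,\dv_f\Rm(S),
\]
and I would treat each of the three summands in turn, commuting derivatives and then inserting the explicit form of $S$. It is convenient (though not essential) first to dispose of the case $\sigma = 0$ and then to reach general $\sigma$ through $S^{\sigma} = S^0 + \tfrac{\sigma}{2}g$, $M^{\sigma} = M^0 - \sigma f$, together with $\Box_f g = 2\Rc$, $\dv_f g = -\nabla f$, and the Bochner identity $\Delta_f\nabla f = \nabla\Delta f + \Rc(\nabla f,\cdot) - \tfrac12\nabla|\nabla f|^2$; this is a short linear-algebra check, and it shows in particular that the term $-\tfrac{\sigma}{2}\dv_f S^{\sigma}$ is exactly what is needed to absorb the contribution of the $\tfrac{\sigma}{2}g$ summand. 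The substantive part of the argument is the same either way.

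For $\dv_f\Delta S$, commuting the free divergence index through the two factors of $\Delta = \nabla_k\nabla_k$ by the Ricci identity yields $\Delta(\dv S)$ plus remainder terms of the schematic types $\Rm\ast\nabla S$, $\Rc\ast\nabla S$, and $(\nabla\Rm)\ast S$, while one carries along the weight factor $-\nabla_i f\,(\Delta S)_{ij}$. The term $-\dv_f(\nabla_{\nabla f}S)$ supplies $-\nabla_{\nabla f}(\dv S)$ together with terms of the form $h\ast\nabla S$ and contractions of $\nabla h$ against $\nabla f$; combined with the weight terms above, these assemble $\Delta_f(\dv_f S)$ modulo curvature and first-order remainders. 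For $\dv_f\Rm(S)$, one has $(\dv\Rm(S))_j = (\nabla_i R_{iklj})S_{kl} + R_{iklj}\nabla_i S_{kl}$, with a further weight term $-\nabla_i f\,R_{iklj}S_{kl}$, and $\nabla_i R_{iklj}$ is rewritten through the contracted second Bianchi identity in terms of first covariant derivatives of $\Rc$.

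Next I would substitute $S = \Rc + h + \tfrac{\sigma}{2}g$ into all the remainder terms. The identities that make them collapse are: the contracted second Bianchi identity in its two usual forms ($\nabla_i R_{ij} = \tfrac12\nabla_j R$ and the expression for $\nabla_i R_{iklj}$ through $\nabla\Rc$); the third-order commutator $\nabla_i h_{jk} - \nabla_j h_{ik} = (\nabla_i\nabla_j - \nabla_j\nabla_i)\nabla_k f$, which rewrites the non-symmetry of $\nabla h$ as a contraction of $\Rm$ with $\nabla f$; the Bochner identity for $\nabla f$; and, crucially, the weighted Bianchi identity $\dv_f S = \tfrac12\nabla M$ of Lemma \ref{lem:divfv}, which lets one trade divergences of $S$ for $\nabla M$ and identify $\Delta_f(\dv_f S)$ with $\tfrac12\Delta_f\nabla M$. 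After this regrouping, the derivative-of-curvature remainders assemble into $S^{kl}\nabla_j S_{kl} = \tfrac12\nabla_j|S|^2$ and into $\tfrac12 S_{jk}\nabla_k M = \tfrac12\bigl(S(\nabla M,\cdot)\bigr)_j$, the $\tfrac{\sigma}{2}g$ part of $S$ entering the weighted divergence accounts for the $-\tfrac{\sigma}{2}\dv_f S$ term, and all remaining quadratic-curvature products cancel.

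I expect this final cancellation to be the only real obstacle: one must check that the many quadratic terms — products of the types $R_{ikjl}R_{kl}$, $R_{ikjl}h_{kl}$, $\langle\Rc,h\rangle$-contractions, and $|h|^2$-contractions — combine, by repeated application of the second Bianchi identity, exactly into $S^{kl}\nabla_j S_{kl}$ with nothing left over. This is where the precise form \eqref{eq:sdef} of $S^{\sigma}$, and not merely its being a symmetric $2$-tensor, is used; the verification is lengthy but routine. Granting \eqref{eq:commidentity}, the identity \eqref{eq:specvident} is immediate: if $M^{\sigma}(g,f)$ is constant then $\dv_f S^{\sigma} = \tfrac12\nabla M^{\sigma} = 0$ by Lemma \ref{lem:divfv}, so $\Delta_f(\dv_f S^{\sigma})$, $\tfrac{\sigma}{2}\dv_f S^{\sigma}$, and $S^{\sigma}(\nabla M^{\sigma},\cdot)$ all vanish, and \eqref{eq:commidentity} reduces to $\dv_f\Box_f S^{\sigma} = \tfrac12\nabla|S^{\sigma}|^2$.
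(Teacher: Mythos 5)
Your overall strategy — expand $\dv_f\Box_f S^{\sigma}$ into $\dv_f\Delta S^{\sigma}$, a weight correction, and $2\dv_f\Rm(S^{\sigma})$; commute derivatives via the Ricci and second Bianchi identities; and use the Hessian commutator $\nabla_i h_{jk}-\nabla_j h_{ik} = [\nabla_i,\nabla_j]\nabla_k f$ plus the weighted Bochner formula to reorganize — is the right shape, and the reduction to $\sigma=0$ via $\Box_f g = 2\Rc$, $\dv_f g = -\nabla f$ is a nice preliminary simplification. But what you have written is a plan, not a proof, and the entire content of the lemma is hidden in the sentence ``the verification is lengthy but routine.'' You have not shown that the residue is exactly $\tfrac12\nabla|S^{\sigma}|^2 + \tfrac12 S^{\sigma}(\nabla M^{\sigma},\cdot)$ with all pure quadratic-curvature products cancelling; asserting that the contracted Bianchi identity makes them combine is precisely the step that needs to be carried out, and it is not obviously true until one does it.

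There is also a strategic point worth flagging. You propose to substitute $S^{\sigma} = \Rc + \nabla\nabla f + \tfrac{\sigma}{2}g$ into the remainder terms and regroup. The paper never performs this expansion. Instead it keeps $S^{\sigma}$ intact and uses the single, targeted conversion
\[
\nabla_p R_{ji} - \nabla_j R_{pi} - R_{jpiq}\nabla_q f \;=\; \nabla_p S^{\sigma}_{ji} - \nabla_j S^{\sigma}_{pi},
\]
together with the contracted second Bianchi identity $\nabla_i R_{ipqj} = \nabla_j R_{qp}-\nabla_q R_{jp}$, to turn every appearance of $\nabla\Rc$ directly into $\nabla S^{\sigma}$ before any cancellation bookkeeping begins. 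This sidesteps the profusion of $R\ast R$, $R\ast h$, $h\ast h$ cross-terms that your decomposition would create, and the residue then visibly assembles into $S^{\sigma}_{ip}\nabla_j S^{\sigma}_{ip}$ plus the $\tfrac12 S^{\sigma}_{ij}\nabla_i M^{\sigma}$ piece. Your route is not wrong in principle, but it is substantially more entangled than necessary, and absent the explicit cancellation check it does not yet establish \eqref{eq:commidentity}. The deduction of \eqref{eq:specvident} from \eqref{eq:commidentity} is correct and matches the paper.
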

\begin{proof}
 Write $S= S^{\sigma}$ and $M = M^{\sigma}$.
 First, we have
 \begin{align*}
    &(\dv_f \Delta_f S)_j = \nabla_i\left(\Delta S_{ij} - \nabla_k S_{ij}\nabla_k f\right) - (\Delta S_{ij} - \nabla_kS_{ij}\nabla_kf) \nabla_if\\
                         &\qquad\qquad=   \nabla_i \Delta S_{ij} - \nabla_i\nabla_k S_{ij}\nabla_k f - \nabla_k S_{ij}\nabla_i\nabla_k f - \Delta S_{ij} \nabla_i f + \nabla_kS_{ij}\nabla_i f\nabla_kf,
 \end{align*}
while
\begin{align*} 
 (\Delta_f \dv_f S)_j &= \Delta\left(\nabla_i S_{ij} - \nabla_i f S_{ij}\right) - \nabla_k f\nabla_k(\nabla_i S_{ij} - \nabla_i f S_{ij})\\
\begin{split}
                      &= \Delta \nabla_i S_{ij} - \Delta \nabla_i f S_{ij}  - 2\nabla_k\nabla_i f\nabla_k S_{ij} - \Delta S_{ij}\nabla_i f   -\nabla_k\nabla_i S_{ij} \nabla_k f\\
                      &\phantom{=}
                     + \frac{1}{2}\nabla_i |\nabla  f|^2 S_{ij} + \nabla_k S_{ij}\nabla_k f\nabla_i f \\
                      &= \Delta \nabla_i S_{ij} - \nabla_k\nabla_i S_{ij}\nabla_k f -S_{ij}\nabla_i \left(\Delta f - \frac{1}{2}|\nabla f|^2\right) - R_{ik}S_{ij}\nabla_k f\\
                      &\phantom{=}-2\nabla_k\nabla_i f \nabla_kS_{ij} - \Delta S_{ij} \nabla_i f + \nabla_k S_{ij} \nabla_i f\nabla_k f,
\end{split}
 \end{align*}
 so
 \begin{align*}
  [\dv_f, \Delta_f] S_j &= [\nabla_i, \Delta]S_{ij} -[\nabla_i, \nabla_k] S_{ij}\nabla_kf +\nabla_i\nabla_k f \nabla_k S_{ij} +R_{ik}\nabla_kf S_{ij}\\
                        &\phantom{=} + S_{ij}\nabla_i \left(\Delta f - \frac{1}{2}|\nabla f|^2\right)\\                        
&= [\nabla_i, \Delta ]S_{ij} +R_{ikjp}S_{ip}\nabla_kf +\nabla_i\nabla_k f \nabla_k S_{ij}\\
    &\phantom{=}+ S_{ij}\nabla_i \left(\Delta f - \frac{1}{2}|\nabla f|^2\right),                        
 \end{align*}
where we have used
\[
    -[\nabla_i, \nabla_k] S_{ij} = R_{ikip}S_{pj} + R_{ikjp}S_{ip} = -R_{kp}S_{pj} + R_{ikjp}S_{ip}.
\]

Now,
\begin{align*}
[ \nabla_i, \Delta] S_{ij} = \frac{1}{2}\nabla_pR S_{pj} + S_{ip}(\nabla_p R_{ji} - \nabla_j R_{pi}) + R_{ap}\nabla_a S_{pj} - 2R_{iajp}\nabla_a S_{ip},
\end{align*}
which, since
\begin{align*}
   S_{ip}(\nabla_p R_{ji} - \nabla_j R_{pi}) &= S_{ip}(\nabla_p S_{ji} - \nabla_j S_{pi}) + S_{ip}R_{pjiq}\nabla_q f,
\end{align*}
can be rewritten as 
\begin{align*}
[\nabla_i, \Delta] S_{ij} &= \frac{1}{2}\nabla_pR S_{pj} + S_{ip}(\nabla_p S_{ji} - \nabla_j S_{pi}) + S_{ip}R_{pjiq}\nabla_q f + R_{ap}\nabla_a S_{pj}\\
    &\phantom{=}- 2R_{iajp}\nabla_a S_{ip}.
\end{align*}
Combined with the expression above, then, we have
\begin{align}
 \begin{split}\label{eq:comm1}
      [\dv_f, \Delta_f] S_j &=  S_{ip}(\nabla_p S_{ji} - \nabla_j S_{pi}) +  2R_{iajp}\nabla_a S_{ip} +  R_{ap}\nabla_a S_{pj}\\
      &\phantom{=}
                        +\nabla_i\nabla_k f \nabla_k S_{ij} + \frac{1}{2}S_{ij}\nabla_i \left(2\Delta f - |\nabla f|^2 + R\right)\\
                         &=  S_{ip}(2\nabla_p S_{ji} - \nabla_j S_{pi}) +  2R_{iajp}\nabla_a S_{ip}  -\sigma \dv_f S_{j}.\\ 
      &\phantom{=}
                        + \frac{1}{2}S_{ij}\nabla_i \left(2\Delta f - |\nabla f|^2 + R - \sigma f\right).
 \end{split}
\end{align}

On the other hand,
\begin{align*}
    \dv_f \Rm(S)_j &= \nabla_i R_{ipqj} S_{pq} + R_{ipqj}\nabla_i S_{pq} - \nabla_i f R_{ipqj}S_{pq}\\
                    &= (\nabla_j R_{qp} - \nabla_q R_{jp} - \nabla_i f R_{jqpi})S_{pq} +  R_{ipqj}\nabla_i S_{pq}\\
                    &= (\nabla_j S_{qp} - \nabla_q S_{jp}) S_{pq}  +  R_{ipqj}\nabla_i S_{pq},
\end{align*}
which, combined with  \eqref{eq:comm1}, yields
\begin{align*}
     &(\dv_f \Box_f S - \Delta_f \dv_f) S_j  =  [\dv_f, \Delta_f] S_j + 2\dv_f(\Rm(S))_j \\
     &\qquad\qquad=  [\dv_f, \Delta_f] S_j + 2(\nabla_j S_{qp} - \nabla_q S_{jp}) S_{pq}  +  2R_{ipqj}\nabla_i S_{pq}\\
     &\qquad\qquad = S_{ip}\nabla_j S_{ip} -\sigma \dv_f S_{j} + \frac{1}{2}S_{ij}\nabla_iM\\
     &\qquad\qquad= \frac{1}{2} \nabla_j |S|^2 -\sigma \dv_f S_j + \frac{1}{2}S_{ij}\nabla_iM,
\end{align*}
which is \eqref{eq:commidentity}.
\end{proof}

\subsection{Evolution equations under the modified Ricci flow.}
For the remainder of this section, we will assume that $g = g(t)$ is a smooth solution to the modified Ricci flow \eqref{eq:mrf} on $M\times [0, T)$, $T\in (0, \infty]$, 
with associated potential $f = f_{g(t)}$ satisfying 
\begin{equation}\label{eq:ftmin}
        \Wc_{\sigma}(g(t), f_{g(t)}) = \mu_{\sigma}(g(t)), \quad \int_M e^{-f_{g(t)}}\,dV_{g(t)} = 1,
\end{equation}
and therefore also 
\[
M^{\sigma}(g(t), f_{g(t)}) = \mu_{\sigma}(g(t)),
\]
for $t\in [0, T)$. Since $f$ is determined
by $g$, we will write $S^{\sigma}(g)$ (or simply $S^{\sigma}$) in place of $S^{\sigma}(g, f)$.

In the computations below, it will be convenient to use
the operator $D_t$ which acts on families of $k$-tensors $V = V(t)$ by
\[
 D_t V_{a_1 a_2 \cdots a_k} = \pdt V_{a_1a_2\cdots a_k} - S^{\sigma}_{a_1 p} V_{p a_2 \cdots a_k} - S^{\sigma}_{a_2p} V_{a_1p \cdots a_k} 
 - \cdots - S^{\sigma}_{a_kp} V_{a_1a_2 \cdots p},
\]
which satisfies the product rule
\[
    \frac{d}{dt} \langle V, W\rangle = \langle D_t V, W\rangle + \langle V, D_t W\rangle,  
\]
relative to the inner product on $T^k(T^*M)$ induced by $g(t)$. 
In terms of a smooth family $\{e_i(t)\}_{i=1}^n$ of local frames which is evolving so as to remain orthogonal under \eqref{eq:mrf},
$D_{t} V_{a_1 a_2 \cdots a_k}$ expresses the components of the total derivative
\[
    D_t V_{a_1a_2 \cdots a_k} = \frac{d}{dt}V(e_{a_1}(t), e_{a_2}(t), \ldots, e_{a_k}(t)).
\]
See \cite{HamiltonHarnack} or Appendix F of \cite{RFV2P2}
for a natural interpretation of the operator $D_t$.

Under \eqref{eq:mrf}, we have $\pdt\left(e^{-f}dV_g\right) = -H dV_g$
where
\begin{equation}\label{eq:hdef}
    H = \pd{f}{t} + \operatorname{tr}_{g} S^{\sigma}.
\end{equation}
Together, $S^{\sigma}$ and $H$ satisfy a coupled system of equations which we now derive.

\begin{proposition}\label{prop:sev}
 Suppose that $g = g(t)$ satisfies \eqref{eq:mrf} on $M\times [0, T)$ and $f = f_{g(t)}$. Then $S^{\sigma} = S^{\sigma}(g(t))$ satisfies
 \begin{equation}\label{eq:sev}
 D_t S^{\sigma} = \Box_f S^{\sigma} + \nabla\nabla H.
 \end{equation}
\end{proposition}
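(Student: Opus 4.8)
The plan is to compute $D_t S^\sigma$ directly from the definition $S^\sigma = \Rc(g) + \nabla\nabla f + (\sigma/2)g$, using the standard evolution equations for the Ricci tensor and Christoffel symbols under a general flow $\partial_t g = -2S^\sigma$ (not just Ricci flow), and then to organize the resulting lower-order terms with the help of the weighted Bianchi identity of Lemma \ref{lem:divfv}. First I would record the variation formulas: under $\partial_t g_{ij} = -2 S^\sigma_{ij}$ one has $\partial_t \Gamma^k_{ij} = -g^{kl}(\nabla_i S^\sigma_{jl} + \nabla_j S^\sigma_{il} - \nabla_l S^\sigma_{ij})$ and the Lichnerowicz-type formula $\partial_t R_{ij} = \Delta_L S^\sigma_{ij} + \text{(lower order)}$, where the lower-order terms are governed by $\operatorname{div} S^\sigma$ and $\operatorname{tr} S^\sigma$. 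Substituting $D_t$ for $\partial_t$ absorbs precisely the curvature-free quadratic terms $-S^\sigma_{ip}(\cdots)_{pj} - S^\sigma_{jp}(\cdots)_{ip}$ that the operator $D_t$ is designed to cancel, and the Lichnerowicz Laplacian $\Delta_L$ reorganizes into $\Delta + 2\Rm(\cdot)$ plus Ricci-contraction terms — this is the reason $\Box_f$ appears.

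The second ingredient is to handle the $\nabla\nabla f$ term, which requires knowing $\partial_t f$. Here $f = f_{g(t)}$ is the entropy minimizer, so $\partial_t f$ is not arbitrary; however, the key point is that we do not need an explicit formula for it. We only need that $f$ satisfies the minimizer equation, equivalently $M^\sigma(g(t), f_{g(t)}) \equiv \mu_\sigma(g(t))$ along the flow, and that $H = \partial_t f + \operatorname{tr}_g S^\sigma$ as in \eqref{eq:hdef}. Differentiating $\nabla\nabla f$ in $t$ produces a term $\nabla\nabla(\partial_t f)$ together with terms involving $\partial_t \Gamma$ contracted with $\nabla f$; the former combines with $\nabla\nabla(\operatorname{tr}_g S^\sigma)$-type contributions to assemble $\nabla\nabla H$, and the latter feed into the lower-order bookkeeping. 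The identity $\dv_f S^\sigma = \tfrac12 \nabla M^\sigma = \tfrac12 \nabla \mu_\sigma(g(t))$ from Lemma \ref{lem:divfv}, which along the flow means $\dv_f S^\sigma = 0$ since $\mu_\sigma(g(t))$ is spatially constant, is what makes the remaining first-order terms collapse: the contracted second Bianchi identity and the commutator of $\nabla$ with $\Delta$ generate expressions in $\operatorname{div} S^\sigma = S^\sigma(\nabla f, \cdot)$ and $\nabla(\operatorname{tr} S^\sigma)$ that, modulo $\nabla\nabla H$, cancel against each other.

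I expect the main obstacle to be the careful matching of the first-order and zeroth-order terms — specifically, tracking how $\partial_t f$, the Ricci-contraction terms from $\Delta_L$, the $\partial_t\Gamma$ terms hitting $\nabla f$, and the $\sigma$-dependent pieces all conspire to produce exactly $\Box_f S^\sigma + \nabla\nabla H$ with no leftover lower-order remainder. This is where the precise definitions of $\Box_f = \Delta_f + 2\Rm$, of $H$, and of $D_t$ must be used in concert, and where a sign error is easiest to make. A useful consistency check is the soliton case: when $g$ is a soliton and $f$ its potential, $S^\sigma \equiv 0$ and $H \equiv 0$, so both sides vanish trivially; a more stringent check is to verify the identity at first order in a variation about a soliton, where \eqref{eq:sev} should reduce to the linearized equation $\partial_t (S^\sigma)' = \Box_f (S^\sigma)' + \nabla\nabla H'$ that underlies the stability analysis in \cite{HaslhoferMueller, Kroencke, SunWang}. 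I would carry out the computation in a normal frame at a point, compute $D_t$ of each of the three summands of $S^\sigma$ separately, and only combine at the end.
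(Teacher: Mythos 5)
Your proposal follows essentially the same route as the paper: the paper first records the linearization of $g\mapsto S^{\sigma}(g, f)$ in a general direction $(\delta g, \delta f) = (h, k)$, organized so that the $\dv_f^*\dv_f h$ term drops out by Lemma \ref{lem:divfv} when $h = -2S^{\sigma}$, then substitutes $k = \partial_t f$ and absorbs the residual quadratic term $-2S^{\sigma}_{ik}S^{\sigma}_{kj}$ into $D_t$, exactly as you describe. The one thing you gloss over but the paper makes explicit is the clean cancellation between the $\partial_t\Gamma$-hitting-$\nabla f$ terms in the variation of $\nabla\nabla f$ and the extra first-order terms produced when $\dv^*\dv h$ is rewritten as $\dv_f^*\dv_f h$ in the variation of $\Rc$; this is what makes the ``no leftover lower-order remainder'' you anticipate actually come out.
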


\begin{proof} We first compute the linearization of the operator 
\[
S^{\sigma}(g, f) = \Rc(g) + \nabla\nabla f + (\sigma/2)g
\]
acting on general $g\in \mathcal{R}(M)$ and $f\in C^{\infty}(M)$ in the directions $\delta g = h$ and $\delta f = k$. We claim that
\begin{align}\label{eq:slin}
 \begin{split}
    \delta S^{\sigma}_{ij} &= - \frac{1}{2}\Delta_f h_{ij} - \Rm(h)_{ij} - \dv_f^*\dv_f h_{ij}
    + \frac{1}{2}\left(S^{\sigma}_{il}h_{lj} + h_{il}S^{\sigma}_{lj}\right)\\
    &\phantom{=} + \nabla_i\nabla_j\left(k - \frac{1}{2}\operatorname{tr}_gh\right).
 \end{split}
 \end{align}

 To see this, we start with the well-known formula
\begin{align*}
\begin{split}
    \delta R_{ij} &= -\frac{1}{2}\Delta h_{ij} - \Rm(h)_{ij} -\dv^*\dv h_{ij} - \frac{1}{2}\nabla_i\nabla_j \operatorname{tr}_gh  +\frac{1}{2}(R_{il}h_{lj} + R_{jl} h_{il})
\end{split}
\end{align*}
for the linearization of the Ricci tensor (see, e.g., \cite{Besse}).
Using
\[
\dv h_j = \dv_f h_j + h(\nabla f)_j, \quad \dv_f^* = \dv^*,
\]
we obtain
\begin{align*}
 \dv^*\dv h_{ij} &= \dv^*\dv_f h_{ij} + \dv^*(h(\nabla f))_{ij}\\
    &=\dv_f^*\dv_f h_{ij} -\frac{1}{2}( h_{il}\nabla_l \nabla_j f + h_{jl}\nabla_i\nabla_l f) -\frac{1}{2}(\nabla_i h_{jl}+ \nabla_j h_{il})\nabla_l f,
\end{align*}
and thus
\begin{align*}\label{eq:rclin2}
\begin{split}
   \delta R_{ij} &= -\frac{1}{2}\Delta_f h_{ij} - \Rm(h)_{ij} -\dv_f^*\dv_f h_{ij} - \frac{1}{2}\nabla_i\nabla_j \operatorname{tr}_gh\\
   &\phantom{=}
   +\frac{1}{2}(R_{il}+\nabla_i\nabla_lf)h_{lj} + \frac{1}{2}(R_{jl} + \nabla_j\nabla_l f) h_{il}\\
   &\phantom{=}
   +\frac{1}{2}\left(\nabla_ih_{jl} + \nabla_j h_{il} - \nabla_l h_{ij}\right)\nabla_l f.
\end{split}
\end{align*}
But,
\begin{align*}
 \begin{split}
    \delta \nabla_i\nabla_j f = \nabla_i \nabla_j k - \frac{1}{2}\left(\nabla_ih_{jl} + \nabla_j h_{il} - \nabla_l h_{ij}\right)\nabla_l f,
 \end{split}
 \end{align*}
and by summing the identities, we arrive at
\begin{align*}
 \delta (R_{ij} + \nabla_i\nabla_j f) &= -\frac{1}{2}\Delta_f h_{ij} - \Rm(h)_{ij} -\dv_f^*\dv_f h_{ij} +\nabla_i\nabla_j\left(k -  \frac{1}{2}\operatorname{tr}_gh\right)\\
   &\phantom{=}
   +\frac{1}{2}(R_{il}+\nabla_i\nabla_lf)h_{lj} + \frac{1}{2}(R_{jl} + \nabla_j\nabla_l f) h_{il}\\
   &= -\frac{1}{2}\Delta_f h_{ij} - \Rm(h)_{ij} -\dv_f^*\dv_f h_{ij} +\nabla_i\nabla_j\left(k -  \frac{1}{2}\operatorname{tr}_gh\right)\\
   &\phantom{=}
   +\frac{1}{2}\left(S^{\sigma}_{il}h_{lj} + S^{\sigma}_{jl} h_{il}\right) - \frac{\sigma}{2} h_{ij},
\end{align*}
and \eqref{eq:slin} follows.

Now we turn to the evolution equation \eqref{eq:sev} for $S = S^{\sigma}(g(t))$. By Lemma \ref{lem:divfv}, $\dv_f S = 0$ for each $t$, so applying \eqref{eq:slin} with $h_{ij} = -2S_{ij}$
and $k = \pd{f}{t}$, we find that
\begin{align*}
    \pdt S_{ij} &= \Delta_f S_{ij} + 2\Rm(S)_{ij} - 2S_{ik}S_{kj} + \nabla_i\nabla_j\left(\pd{f}{t} + \operatorname{tr}_gS\right).
\end{align*}
Since
\[
 D_t S_{ij} = \pdt S_{ij} + 2S_{ik}S_{kj}, 
\]
and
\[
\pd{f}{t} + \operatorname{tr}_gS = \pd{f}{t} + \Delta f + R + \frac{\sigma n}{2} = H,
\]
we then obtain \eqref{eq:sev}.
\end{proof}

Here and elsewhere,
we will use $(\cdot, \cdot)$ to denote the $L^2$-inner product
\[
 (V, W) =  \int_M \langle V, W\rangle_g\, e^{-f} dV_g
\]
on $C^{\infty}(T^k(T^*M))$ and $\|\cdot\|$ to denote the associated norm.

\begin{proposition}\label{prop:heq}
 Suppose that $g = g(t)$ satisfies \eqref{eq:mrf} on $M\times [0, T)$ and $f = f_{g(t)}$.
 Then
 \begin{equation}\label{eq:heq}
    \Delta_f H = \frac{\sigma}{2}H - \left|S^{\sigma}\right|^2 + \|S^{\sigma}\|^2, \quad \int_M H\,e^{-f}\,dV_g \equiv 0. 
 \end{equation}
\end{proposition}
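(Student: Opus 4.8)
The plan is to obtain both assertions by differentiating in $t$ the two relations that fix $f = f_{g(t)}$ along the flow: the normalization $\int_M e^{-f}\,dV_g \equiv 1$ and the minimizer identity $M^\sigma(g(t), f_{g(t)}) = \mu_\sigma(g(t))$, which are part of the standing assumptions of this subsection (see \eqref{eq:ftmin} and \eqref{eq:mdef}). For the integral statement, differentiating the normalization and using the evolution $\pdt(e^{-f}\,dV_g) = -H\,e^{-f}\,dV_g$ of the weighted volume element (cf. \eqref{eq:hdef}) gives at once, for every $t$,
\[
 0 = \frac{d}{dt}\int_M e^{-f}\,dV_g = -\int_M H\, e^{-f}\,dV_g.
\]

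For the equation, the key point is that $M^\sigma(g(t), f_{g(t)})$ is constant in space, so its $t$-derivative is the spatial constant $\frac{d}{dt}\mu_\sigma(g(t)) = 2\|S^\sigma\|^2$, where the last equality is \eqref{eq:muev}. On the other hand, this derivative equals the linearization $\delta M^\sigma$ of $M^\sigma(g,f) = 2\Delta_g f - |\nabla_g f|^2 + R_g - \sigma f$ in the directions $\delta g = h = \partial_t g = -2S^\sigma$ and $\delta f = k = \partial_t f$; the map $t\mapsto f_{g(t)}$ is smooth in space-time because of the analytic dependence of $f_g$ on $g$ from Theorem \ref{thm:ls}(a), so this is legitimate. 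I would evaluate $\delta M^\sigma$ termwise, using the classical formula $\delta R = -\Delta(\operatorname{tr}_g h) + \dv\dv h - \langle h, \Rc\rangle$ for the scalar curvature together with the elementary linearizations of $\Delta_g f$ and $|\nabla_g f|^2$, which contribute terms assembled from $\nabla k$, $\Delta k$, and $\dv h - \tfrac12\nabla(\operatorname{tr}_g h)$. Substituting $h = -2S^\sigma$ and invoking $\dv_f S^\sigma = \tfrac12\nabla M^\sigma = 0$ from Lemma \ref{lem:divfv} (equivalently $\dv S^\sigma = S^\sigma(\nabla f, \cdot)$), the terms proportional to $S^\sigma(\nabla f, \nabla f)$ cancel in pairs, while the Hessian terms combine through $\Rc + \nabla\nabla f = S^\sigma - \tfrac{\sigma}{2}g$, and collecting the rest one finds
\[
 \delta M^\sigma = 2\Delta_f\big(\partial_t f + \operatorname{tr}_g S^\sigma\big) - \sigma\big(\partial_t f + \operatorname{tr}_g S^\sigma\big) + 2|S^\sigma|^2 = 2\Delta_f H - \sigma H + 2|S^\sigma|^2,
\]
using $H = \partial_t f + \operatorname{tr}_g S^\sigma$. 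Setting this equal to $2\|S^\sigma\|^2$ and dividing by $2$ yields $\Delta_f H = \tfrac{\sigma}{2}H - |S^\sigma|^2 + \|S^\sigma\|^2$.

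The main obstacle is simply the length of the termwise linearization and the care needed to see the $\nabla f$-quadratic cancellations and the assembly of $\Delta_f$ out of the scalar terms; there is no conceptual difficulty once \eqref{eq:muev} and Lemma \ref{lem:divfv} are in hand. As an alternative, one could instead take the $g$-trace of the evolution equation \eqref{eq:sev} for $S^\sigma$, but $\partial_t(\operatorname{tr}_g S^\sigma)$ then reintroduces $\partial_t^2 f$ and the bookkeeping is messier, so I would prefer the variational route above.
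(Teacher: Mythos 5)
Your proposal is correct and follows essentially the same route as the paper: differentiate the normalization $\int_M e^{-f}\,dV_g = 1$ to get the integral identity, then differentiate $M^\sigma(g(t),f_{g(t)}) = \mu_\sigma(g(t))$ in $t$ (equivalently, compute $\partial_t$ of each term $\Delta f$, $|\nabla f|^2$, $R$, $-\sigma f$ as the paper does) and equate the result to $2\|S^\sigma\|^2$ via \eqref{eq:muev}. The paper carries out the termwise calculation explicitly but the decomposition, the use of $\dv_f S^\sigma = 0$ to kill the $S^\sigma(\nabla f,\cdot)$ contributions, and the final assembly into $2\Delta_f H - \sigma H + 2|S^\sigma|^2$ are exactly what you describe.
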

\begin{proof}
The second equation in \eqref{eq:heq} follows directly from \eqref{eq:ftmin} and \eqref{eq:hdef}.
We will derive the first equation from \eqref{eq:feq}.

Let $S = S^{\sigma}$. Using $\dv_f S = \dv S - S(\nabla f, \cdot) = 0$, we compute that
\begin{align}
\begin{split}\label{eq:ddtlap} 
  \pdt\left(\Delta f\right) 
  &=\Delta \pd{f}{t} + 2\langle S, \nabla\nabla f\rangle  +\langle 2\dv S - \nabla \operatorname{tr}_gS, \nabla f\rangle\\
  &=\Delta \pd{f}{t} + 2\langle S, \nabla\nabla f\rangle  + 2S(\nabla f, \nabla f) - \langle \nabla \operatorname{tr}_gS, \nabla f\rangle.
\end{split}
\end{align}
Next, we see that
\begin{align}
\begin{split}\label{eq:ddtgrad}
    \pdt|\nabla f|^2 &= 2S(\nabla f, \nabla f) + 2 \left\langle \nabla\pd{f}{t}, \nabla f \right\rangle, 
\end{split} 
\end{align}
and, using 
\[
    \dv\dv S = \dv(\dv_f S) + \langle \dv S, \nabla f\rangle + \langle S, \nabla\nabla f\rangle
    = S(\nabla f, \nabla f) + \langle S, \nabla\nabla f\rangle,
\]
together with standard variation formulas for the scalar curvature, that
\begin{align}
\begin{split}\label{eq:ddtR}
    \pdt R &= 2\Delta \operatorname{tr}_gS - 2\dv(\dv S) + 2 \langle S, \Rc\rangle\\
           &= 2\Delta \operatorname{tr}_gS - 2S(\nabla f,\nabla f)  + 2\langle S, \Rc - \nabla \nabla f\rangle.
\end{split}
\end{align}
From \eqref{eq:ddtlap}, \eqref{eq:ddtgrad}, and \eqref{eq:ddtR}, we then obtain
\begin{align*}
\frac{d}{dt}\mu_{\sigma}(g(t)) &= \pdt\left(2\Delta f - |\nabla f|^2 + R -\sigma f\right)\\
    &=  2 \Delta H - 2\left\langle \nabla H, \nabla f\right\rangle + 2\left \langle S, \Rc + \nabla\nabla f\right\rangle -\sigma \pd{f}{t}\\
    &= 2\Delta_f H - \sigma H + 2|S|^2.
\end{align*}
Since $\frac{d}{dt} \mu_{\sigma}(g(t)) = 2\|S\|^2$, this implies the first equation in \eqref{eq:heq}.
\end{proof}

\section{Estimating the Dirichlet-Einstein quotient} 
In this section, we continue to assume that $g = g(t)$ is a smooth solution to the modified Ricci flow \eqref{eq:mrf} on $M\times [0, T)$. Define
\[
  E(t) = \|S^{\sigma}\|^2, \quad F(t) = \|\nabla S^{\sigma}\|^2 - 2\left(\Rm(S^{\sigma}), S^{\sigma}\right),
\]
where $S^{\sigma} = S^{\sigma}(g(t))$ and, as before, $\|\cdot\|$ and $(\cdot, \cdot)$ denote the $L^2$-norm and inner product relative to $g(t)$ and $e^{-f_{g(t)}}\,dV_{g(t)}$. When $E(t) \neq 0$, we define the Dirichlet-Einstein quotient 
\begin{equation}\label{eq:dequotientdef}
N(t) = F(t)/ E(t).
\end{equation}
We will derive a differential inequality for $N$ which will allow us to control it from above. The upper bound on $N$ will
in turn yield a lower bound on $E$.
Define the operators $\Lc_0$ and $\Lc_1$ by
\[
 \Lc_{0}W = D_tW - \Box_fW - \nabla\nabla H, \quad \Lc_{1}W = D_tW + \Box_fW - \nabla\nabla H.
\]
By \eqref{eq:sev}, $\Lc_0 S^{\sigma} \equiv 0$. 

\subsection{Evolution equations}
In the next two propositions, we compute the evolution equations for the quantities $E(t)$ and $F(t)$.
\begin{proposition}\label{prop:eev}
Suppose that $g(t)$ solves \eqref{eq:mrf} on $M\times [0, T)$ and $f = f_{g(t)}$. Then $E$ satisfies
 \begin{align}
 \label{eq:eev1}
 \begin{split}
    \dot{E}(t) &= -2 F(t) - (HS^{\sigma}, S^{\sigma})\\
    &= (\Lc_{1}S^{\sigma}, S^{\sigma}) -(HS^{\sigma}, S^{\sigma}).
 \end{split}
 \end{align}
\end{proposition}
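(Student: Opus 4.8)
The plan is to differentiate $E(t)=\|S^\sigma\|^2=\int_M|S^\sigma|^2 e^{-f}\,dV_g$ directly, using the product rule for $D_t$ recorded in Section \ref{sec:evcomp} together with the evolution of the weighted measure. Since $\frac{d}{dt}\langle V,W\rangle = \langle D_tV,W\rangle + \langle V,D_tW\rangle$ for the $g(t)$-inner product, and $\pdt(e^{-f}dV_g) = -H\,dV_g = -H e^{f}\cdot e^{-f}dV_g$ by \eqref{eq:hdef}, I would write
\[
\dot E(t) = \int_M 2\langle D_t S^\sigma, S^\sigma\rangle e^{-f}\,dV_g - \int_M |S^\sigma|^2 H\,e^{-f}\,dV_g = 2(D_t S^\sigma, S^\sigma) - (HS^\sigma, S^\sigma).
\]
Here $(HS^\sigma,S^\sigma)$ is shorthand for $\int_M H|S^\sigma|^2 e^{-f}\,dV_g$, consistent with the notation $(HS^{\sigma},S^{\sigma})$ appearing in the statement.

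Next I would substitute the evolution equation \eqref{eq:sev}, $D_t S^\sigma = \Box_f S^\sigma + \nabla\nabla H$, to get
\[
2(D_t S^\sigma, S^\sigma) = 2(\Box_f S^\sigma, S^\sigma) + 2(\nabla\nabla H, S^\sigma).
\]
For the first term, integrate by parts in the weighted $L^2$ inner product: since $\Box_f = \Delta_f + 2\Rm$ and $\Delta_f$ is self-adjoint with $(\Delta_f V, V) = -\|\nabla V\|^2$ with respect to $e^{-f}dV_g$ (this is the defining property of the weighted Laplacian — $\int_M \langle \Delta_f V,V\rangle e^{-f}dV_g = \int_M \dv(e^{-f}\nabla\langle\cdot\rangle)\cdots = -\|\nabla V\|^2$), we obtain $(\Box_f S^\sigma, S^\sigma) = -\|\nabla S^\sigma\|^2 + 2(\Rm(S^\sigma),S^\sigma) = -F(t)$ by the definition of $F$. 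For the second term, I would integrate by parts twice, moving both derivatives off $H$ onto $S^\sigma$; the key point is that $\dv_f S^\sigma = 0$ by Lemma \ref{lem:divfv} (since $M^\sigma(g(t),f_{g(t)})=\mu_\sigma(g(t))$ is constant in space), so $(\nabla\nabla H, S^\sigma) = (\nabla H, -\dv_f S^\sigma) = 0$, and hence $2(D_tS^\sigma,S^\sigma) = -2F(t)$. Combining gives the first equality $\dot E(t) = -2F(t) - (HS^\sigma,S^\sigma)$. The second equality is then purely formal: by definition $\Lc_1 W = D_t W + \Box_f W - \nabla\nabla H$, so $(\Lc_1 S^\sigma, S^\sigma) = (D_t S^\sigma, S^\sigma) + (\Box_f S^\sigma, S^\sigma) - (\nabla\nabla H, S^\sigma)$; using $D_t S^\sigma = \Box_f S^\sigma + \nabla\nabla H$ and the vanishing of $(\nabla\nabla H, S^\sigma)$ this equals $2(\Box_f S^\sigma, S^\sigma) + (\nabla\nabla H, S^\sigma) = -2F(t)$ as well, so $\dot E(t) = (\Lc_1 S^\sigma, S^\sigma) - (HS^\sigma,S^\sigma)$.

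The only mild subtlety — and the step I'd be most careful with — is the bookkeeping in the double integration by parts for the $(\nabla\nabla H, S^\sigma)$ term: one must use the weighted divergence $\dv_f$ (not the ordinary $\dv$) at each stage so that no boundary-like weight terms are left over, and then invoke $\dv_f S^\sigma = 0$. Everything else is a direct application of the product rule for $D_t$, the evolution equation \eqref{eq:sev}, and self-adjointness of $\Delta_f$ in $L^2(e^{-f}dV_g)$. No new ideas beyond Section \ref{sec:evcomp} are needed; the proposition is essentially a clean rewriting of \eqref{eq:sev} paired together with the weighted measure evolution.
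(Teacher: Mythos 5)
Your proof is correct and follows essentially the same approach as the paper: differentiate $E$ via the product rule for $D_t$ together with the measure evolution, substitute \eqref{eq:sev}, integrate by parts, and use $\dv_f S^\sigma = 0$ to kill the $(\nabla\nabla H, S^\sigma)$ term. The only cosmetic difference is in the second identity, where the paper splits $2(D_tS^\sigma,S^\sigma) = ((D_t+\Box_f)S^\sigma,S^\sigma)+((D_t-\Box_f)S^\sigma,S^\sigma)$ and invokes $\Lc_0 S^\sigma=0$, whereas you expand $\Lc_1$ directly; both are trivially equivalent.
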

\begin{proof} Let $S = S^{\sigma}$. To begin, we have
 \begin{equation}\label{eq:e1}
    \dot{E}(t) = 2(D_t S, S) - (HS, S),
 \end{equation}
and using \eqref{eq:sev} and integrating by parts, we obtain 
\begin{align*}
  (D_tS, S) &= (\Delta_f S + 2\Rm(S) + 2\dv_f^*\nabla H, S)\\
            &= -\|\nabla S\|^2 + 2(\Rm(S), S) + 2(\nabla H, \dv_f S).
\end{align*}
But $\dv_f S \equiv 0$, so $(D_t S, S) = -F$, and the first identity follows from \eqref{eq:e1}.

For the second identity, we write the first term of \eqref{eq:e1} instead as
\begin{align*}
    2(D_t S, S) &= ((D_t +\Box_f) S, S) + ((D_t - \Box_f)S, S)\\
    &= (\Lc_1 S, S) + (\Lc_0 S, S) + 2(\nabla\nabla H, S)\\
    &= (\Lc_1 S, S), 
\end{align*}
since $\Lc_0 S = 0$, and $(\nabla\nabla H, S) = -(\nabla H, \dv_f S)  = 0$ as above.
\end{proof}

\begin{proposition}\label{prop:fev}
 Suppose that $g(t)$ solves \eqref{eq:mrf} on $M\times [0, T)$. Then $F$ satisfies
 \begin{equation}\label{eq:fid}
    F(t) = -\frac{1}{2}(\Lc_1 S^{\sigma}, S^{\sigma}), \quad \dot{F}(t) = -\frac{1}{2}\|\Lc_1 S^{\sigma}\|^2 -E^2(t) +  (|S^{\sigma}|^2 S^{\sigma} ,S^{\sigma}) +  J(t),
\end{equation}
where
\begin{align}\label{eq:jdef}
\begin{split}
J(t) &= 2\int_M\big(\left\langle [D_t, \nabla_i] S^{\sigma}, \nabla_i S^{\sigma}\right\rangle - (D_t\Rm)(S^{\sigma}, S^{\sigma})\big)e^{-f}\,dV\\
    &\phantom{=} - \int_M H\left(\frac{\sigma}{2}|S^{\sigma}|^2+|\nabla S^{\sigma}|^2 - 2\Rm(S^{\sigma}, S^{\sigma})\right)e^{-f}\,dV.
\end{split}
\end{align}
\end{proposition}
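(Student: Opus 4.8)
The plan is to handle the algebraic identity first and then obtain the evolution equation by differentiating the definition $F=(\nabla S^{\sigma},\nabla S^{\sigma})-2(\Rm(S^{\sigma}),S^{\sigma})$ term by term, tracking the three sources of $t$-dependence: the tensor $S^{\sigma}$, the metric used in the pairings, and the measure $e^{-f}\,dV$. For the first identity, Proposition \ref{prop:sev} gives $\Lc_0 S^{\sigma}=0$, i.e.\ $D_tS^{\sigma}=\Box_fS^{\sigma}+\nabla\nabla H$, whence $\Lc_1S^{\sigma}=D_tS^{\sigma}+\Box_fS^{\sigma}-\nabla\nabla H=2\Box_fS^{\sigma}$. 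Writing $\Box_f=\Delta_f+2\Rm$ and integrating the $\Delta_f$-term by parts against $e^{-f}\,dV$ (turning $(\Delta_fS^{\sigma},S^{\sigma})$ into $-\|\nabla S^{\sigma}\|^2$) yields $(\Lc_1S^{\sigma},S^{\sigma})=2(\Box_fS^{\sigma},S^{\sigma})=-2F(t)$; this is in fact already implicit in the proof of Proposition \ref{prop:eev}.

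For the evolution equation I would use the product rule for $D_t$ together with $\pdt(e^{-f}\,dV)=-H\,e^{-f}\,dV$, so that $\tfrac{d}{dt}(V,W)=(D_tV,W)+(V,D_tW)-\int_M H\langle V,W\rangle\,e^{-f}\,dV$ for any families $V,W$, and differentiate $F$. Commuting $D_t$ past $\nabla$ in $\tfrac{d}{dt}\|\nabla S^{\sigma}\|^2$ produces the commutator contribution $2\int_M\langle[D_t,\nabla_i]S^{\sigma},\nabla_iS^{\sigma}\rangle\,e^{-f}\,dV$ appearing in $J$; writing $D_t(\Rm(S^{\sigma}))=(D_t\Rm)(S^{\sigma})+\Rm(D_tS^{\sigma})$ and using the pair symmetry $(\Rm(U),W)=(\Rm(W),U)$ in $\tfrac{d}{dt}(\Rm(S^{\sigma}),S^{\sigma})$ produces the terms $-2\int_M(D_t\Rm)(S^{\sigma},S^{\sigma})\,e^{-f}\,dV$ and $-4(\Rm(D_tS^{\sigma}),S^{\sigma})$. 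After one integration by parts in $(\nabla D_tS^{\sigma},\nabla S^{\sigma})$, the two ``main'' pieces coalesce into $-2(D_tS^{\sigma},\Delta_fS^{\sigma})-4(\Rm(D_tS^{\sigma}),S^{\sigma})=-2(D_tS^{\sigma},\Box_fS^{\sigma})$, while the measure variations leave behind exactly the $H$-weighted terms $-\int_M H|\nabla S^{\sigma}|^2e^{-f}\,dV+2\int_M H\Rm(S^{\sigma},S^{\sigma})e^{-f}\,dV$.

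It remains to evaluate $-2(D_tS^{\sigma},\Box_fS^{\sigma})$, and here I substitute $D_tS^{\sigma}=\Box_fS^{\sigma}+\nabla\nabla H$. The piece $-2\|\Box_fS^{\sigma}\|^2$ equals $-\tfrac12\|\Lc_1S^{\sigma}\|^2$ by the first identity. For the cross term, $\nabla\nabla H=-\dv_f^*\nabla H$ together with \eqref{eq:specvident} (the case $M^{\sigma}$ constant of Lemma \ref{lem:commidentity}, applicable since $M^{\sigma}(g(t),f_{g(t)})=\mu_{\sigma}(g(t))$ is constant on $M$ for each $t$) gives $-2(\nabla\nabla H,\Box_fS^{\sigma})=2(\nabla H,\dv_f\Box_fS^{\sigma})=(\nabla H,\nabla|S^{\sigma}|^2)$; integrating by parts and applying Proposition \ref{prop:heq}, $\Delta_fH=\tfrac{\sigma}{2}H-|S^{\sigma}|^2+\|S^{\sigma}\|^2$, turns this into $-\tfrac{\sigma}{2}\int_M H|S^{\sigma}|^2e^{-f}\,dV+(|S^{\sigma}|^2S^{\sigma},S^{\sigma})-E^2(t)$, using $\int_M|S^{\sigma}|^4e^{-f}\,dV=(|S^{\sigma}|^2S^{\sigma},S^{\sigma})$ and $\|S^{\sigma}\|^2\int_M|S^{\sigma}|^2e^{-f}\,dV=E^2(t)$.

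Assembling all the contributions, the $-\tfrac12\|\Lc_1S^{\sigma}\|^2$, $-E^2(t)$, and $(|S^{\sigma}|^2S^{\sigma},S^{\sigma})$ terms appear exactly as in \eqref{eq:fid}, and the leftover terms --- the commutator term, the $(D_t\Rm)$-term, the three $H$-weighted terms, and the $-\tfrac{\sigma}{2}\int_M H|S^{\sigma}|^2e^{-f}\,dV$ term --- recombine precisely into the expression \eqref{eq:jdef} for $J(t)$. The calculation is otherwise routine; the only real obstacle is the sign bookkeeping and the recognition that $\|\Lc_1S^{\sigma}\|^2$, $E^2(t)$, and the quartic term all originate from the single substitution of $D_tS^{\sigma}=\Box_fS^{\sigma}+\nabla\nabla H$ into $-2(D_tS^{\sigma},\Box_fS^{\sigma})$, with everything not expressible through $\Lc_1S^{\sigma}$ alone folded into $J(t)$.
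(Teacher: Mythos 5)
Your proposal is correct and follows essentially the same route as the paper: compute $\dot F$, commute $D_t$ with $\nabla_i$ and integrate by parts to reduce the principal terms to $-2(D_tS^{\sigma},\Box_fS^{\sigma})$, substitute $D_tS^{\sigma}=\Box_fS^{\sigma}+\nabla\nabla H$, and handle the cross term via \eqref{eq:specvident} and the $H$-equation \eqref{eq:heq}, folding everything else into $J(t)$. The only cosmetic difference is that you derive $\|\Lc_1S^{\sigma}\|^2=4\|\Box_fS^{\sigma}\|^2$ by the direct substitution $\Lc_1S^{\sigma}=2\Box_fS^{\sigma}$, whereas the paper organizes the same computation via a polarization identity in $\Lc_0$ and $\Lc_1$.
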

\begin{proof} Let $S = S^{\sigma}$.
 Integrating by parts and using that $\Lc_0 S = 0$ and $(\nabla\nabla H, S) = 0$, we see that
\begin{align*}
 \begin{split}
  F &= -(\Box_f S, S) =\frac{1}{2}\left(\big((D_t - \Box_f)S - \nabla\nabla H, S\big) - \big((D_t + \Box_f)S - \nabla\nabla H, S\big)\right)\\
  &= -\frac{1}{2}\big(\Lc_1 S, S),
 \end{split}
 \end{align*}
which is the first identity in \eqref{eq:fid}.

For the second identity,  we first compute directly that
\begin{align}\label{eq:f1}
 \begin{split}
  \dot{F} &= 2\int_M\big(\langle D_t \nabla_i S, \nabla_i S\rangle -2\Rm(D_t S, S)\big)e^{-f}\,dV\\
  &\phantom{=} - \int_M\big(H(|\nabla S|^2 - 2\Rm(S, S)) + 2D_t\Rm(S, S) \big)e^{-f}\,dV,
 \end{split}
\end{align}
and, then, that
\begin{align*}
  &\int_M\big(\langle D_t \nabla_i S, \nabla_i S\rangle -2\Rm(D_t S, S)\big)e^{-f}\,dV\\
  &\qquad=  \int_M\big(\langle [D_t,  \nabla_i] S, \nabla_i S\rangle - \langle \Delta_f S + 2\Rm(S), D_t S\rangle \big)e^{-f}\,dV\\
  &\qquad= \int_M\big(\langle [D_t,  \nabla_i] S, \nabla_i S\rangle - \langle \Box_f S, D_t S\rangle \big)e^{-f}\,dV.
\end{align*}
Next, we write
\begin{align*}
-(\Box_fS, D_t S) &= -(\Box_f S, D_t S - \nabla\nabla H) - (\Box_f S, \nabla\nabla H)\\
&= \frac{1}{4}\|\Lc_0 S\|^2 - \frac{1}{4}\|\Lc_1 S\|^2  + (\dv_f\Box_f S, \nabla H)\\
&= -\frac{1}{4}\|\Lc_1 S\|^2 + (\dv_f\Box_f S, \nabla H),
\end{align*}
and use \eqref{eq:commidentity} and \eqref{eq:heq} to compute that
\begin{align*}
  (\dv_f\Box_f S, \nabla H) &=  \frac{1}{2}(\nabla |S|^2, \nabla H) = -\frac{1}{2}(\Delta_f HS, S)\\
        &= \frac{1}{2}\left((|S|^2S, S) - \frac{\sigma}{2}(HS, S) - \|S\|^4\right).
\end{align*}
Thus,
\begin{align*}
&2\int_M\big(\langle D_t \nabla_i S, \nabla_i S\rangle -2\Rm(D_t S, S)\big)e^{-f}\,dV\\
&\qquad= - \frac{1}{2}\|\Lc_1 S\|^2 - \frac{\sigma}{2}(HS, S) +  (|S|^2S, S) - E^2,
\end{align*}
which, substituted into \eqref{eq:f1}, yields the identity for $\dot{F}$ in \eqref{eq:fid}.
\end{proof}

\subsection{Estimating error terms}
Next we estimate the error terms in the quantity $J(t)$ appearing in the evolution equation for $F(t)$.  We will use the notation 
\[
\|\cdot\|_{\infty}= \|\cdot\|_{\infty; g(t)} 
\]
to denote the supremum norm on the tensor bundles $T^k(T^*M)$ induced by $g(t)$.
\begin{proposition}\label{prop:jterms}
Suppose $g(t)$ is a solution to the modified Ricci flow \eqref{eq:mrf} on $M\times [0, T)$ with
\[
  \|\Rm(g(t))\|_{\infty} \leq K_0, \quad \|S^{\sigma}(g(t))\|_{\infty} \leq K_1.
\]
There is a constant $C = C(K_0, K_1)$ such that
\begin{equation}\label{eq:commest}
|([D_t, \nabla_i] S^{\sigma}, \nabla_i S^{\sigma})| \leq C\|S^{\sigma}\|_{\infty}(\|S^{\sigma}\|^2 + \|\nabla S^{\sigma}\|^2) 
\end{equation}
and
\begin{equation}\label{eq:dtrmest}
 |(D_t \Rm(S^{\sigma}), S^{\sigma})| \leq C\|S^{\sigma}\|_{\infty}(\|S^{\sigma}\|^2 + \|\nabla S^{\sigma}\|^2).
\end{equation}
Consequently, 
\begin{align}\label{eq:fest}
\begin{split}
    \dot{F}(t) &\leq -\frac{1}{2}\|\Lc_{1} S^{\sigma}\|^2  + C_0(\|H\|_{\infty} + \|S^{\sigma}\|_{\infty})(C_1E(t) + F(t))
\end{split}
\end{align}
for some constants $C_i = C_i(K_0, K_1)$.

\end{proposition}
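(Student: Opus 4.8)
The plan is to prove the three estimates in turn, with the first two being the crux and the third following by assembling the evolution identity for $\dot F$ from Proposition \ref{prop:fev}. Throughout, I fix a time $t$, work with the fixed metric $g(t)$ and weighted measure $e^{-f}\,dV_g$, and abbreviate $S = S^{\sigma}$; all norms are the $g(t)$-norms.

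\emph{The commutator term.} First I would compute $[D_t, \nabla_i]S$ explicitly. Since $g(t)$ solves \eqref{eq:mrf}, we have $\partial_t g_{ij} = -2S_{ij}$, so the variation of the Christoffel symbols is $\partial_t \Gamma^k_{ij} = -(\nabla_i S_{jk} + \nabla_j S_{ik} - \nabla_k S_{ij})$, a first-order expression in $S$. The operator $D_t$ is built precisely so that it differentiates tensor components in a moving orthonormal frame, so $[D_t, \nabla]$ acting on a $2$-tensor is a zeroth-order operator whose coefficients are this $\nabla S$ expression; schematically $[D_t, \nabla_i]S = \nabla S * S$, where $*$ denotes a contraction with universal coefficients. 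Hence $\langle [D_t, \nabla_i]S, \nabla_i S\rangle$ is pointwise bounded by $C|S|\,|\nabla S|^2 \leq C\|S\|_\infty |\nabla S|^2$, and integrating against $e^{-f}\,dV_g$ gives \eqref{eq:commest} (in fact only the $\|\nabla S^{\sigma}\|^2$ term is needed, but the stated form is harmless). No integration by parts is required here.

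\emph{The curvature term.} For \eqref{eq:dtrmest} I would expand $D_t\Rm$. Under \eqref{eq:mrf} the Riemann tensor evolves by a standard formula of the form $D_t \Rm = \Delta_f \Rm + \Rm * \Rm + \nabla^2 S * g + \Rm * S$ — the key point being only that $D_t\Rm$ differs from a second-order elliptic expression in $\Rm$ by terms that are at worst $\nabla\nabla S$ (linear) plus lower order. Then $(D_t\Rm(S), S) = \int \langle D_t\Rm, S\otimes S\rangle e^{-f}dV$, and I handle the two dangerous pieces by integration by parts in the weighted measure: the $\Delta_f\Rm$ contribution becomes $-(\nabla\Rm, \nabla(S\otimes S))_{} + \cdots = -2(\nabla \Rm, \nabla S * S)$, bounded by $C\|S\|_\infty \|\nabla S\|\,\|\nabla\Rm\|$ — but $\|\nabla\Rm\|$ is not among our controlled quantities, so instead I integrate by parts the \emph{other} way, moving the derivative off $\Rm$ entirely: $(\Delta_f \Rm, S\otimes S) = (\Rm, \Delta_f(S\otimes S))$, and $\Delta_f(S\otimes S) = 2(\Delta_f S)*S + 2\nabla S * \nabla S$; the first piece rewrites via $\Box_f S = \Delta_f S + 2\Rm(S)$ and $\Box_f S$ appearing in this bilinear form against $\Rm$ is fine since it contributes $(\Rm, \Box_f S * S)$, pointwise $\leq C K_0 \|S\|_\infty |S|$, while the second piece is $\leq C K_0 |\nabla S|^2$. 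The remaining $\nabla\nabla S * g$ term in $D_t\Rm$ contributes $(\nabla\nabla S, \Rm * S\otimes g)$; integrating by parts once gives $-(\nabla S, \nabla\Rm * S + \Rm * \nabla S)$ — and again I would instead integrate by parts to land both derivatives on the non-$S$ factor, i.e.\ transfer to $(\nabla\nabla(\Rm * g), S\otimes S) = (\Rm*\nabla\nabla g + \cdots)$, which is zeroth order in $S$-derivatives and bounded by $C\|S\|_\infty\|S\|^2$ after the remaining first-order crossterms are absorbed into $\|\nabla S\|^2$ by Cauchy–Schwarz. Collecting everything yields \eqref{eq:dtrmest}. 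This curvature term is the main obstacle: one must be careful to integrate by parts so that uncontrolled norms like $\|\nabla \Rm\|$ or $\|\nabla\nabla S\|$ never survive, using only $K_0$, $K_1$ and the $L^2$-norms of $S$ and $\nabla S$.

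\emph{Assembly.} Finally, substitute \eqref{eq:commest} and \eqref{eq:dtrmest} into the definition \eqref{eq:jdef} of $J(t)$, together with the elementary bounds $|\,(|S|^2 S, S)\,| \leq \|S\|_\infty^2 E(t)$ and $|(H\,(\tfrac{\sigma}{2}|S|^2 + |\nabla S|^2 - 2\Rm(S,S)))| \leq \|H\|_\infty(C\,E(t) + F(t))$ — here using $|\,2\Rm(S,S)\,| \leq CK_0|S|^2$ and recalling $F(t) = \|\nabla S\|^2 - 2(\Rm(S),S)$, so that $\|\nabla S\|^2 = F(t) + 2(\Rm(S),S) \leq F(t) + CK_0 E(t)$, which also lets me replace every $\|\nabla S^{\sigma}\|^2$ appearing on the right-hand sides of \eqref{eq:commest}–\eqref{eq:dtrmest} by $F(t) + C_1 E(t)$. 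Inserting all of this into the identity $\dot F(t) = -\tfrac12\|\Lc_1 S\|^2 - E^2(t) + (|S|^2 S, S) + J(t)$ from \eqref{eq:fid}, discarding the favorable terms $-E^2(t)$ and keeping $-\tfrac12\|\Lc_1 S\|^2$, gives exactly \eqref{eq:fest} with constants $C_i$ depending only on $K_0$ and $K_1$.
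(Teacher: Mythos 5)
Your commutator estimate \eqref{eq:commest} is correct and matches the paper's argument: a pointwise bound $\left|[D_t,\nabla_i]S^{\sigma}\right|\leq C|S^{\sigma}||\nabla S^{\sigma}|$ followed by direct integration, no integration by parts needed. The assembly step is also sound, resting on the comparison $\|\nabla S^{\sigma}\|^2 - 2K_0 E \leq F \leq \|\nabla S^{\sigma}\|^2 + 2K_0 E$ and on discarding the favorable $-E^2$ term.

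The estimate for $(D_t \Rm(S^{\sigma}), S^{\sigma})$ is the problem. The evolution formula you invoke, $D_t\Rm = \Delta_f\Rm + \Rm*\Rm + \nabla^2 S^{\sigma} * g + \Rm*S^{\sigma}$, is not correct under the modified flow. The variation of curvature under $\partial_t g_{ij} = -2S^{\sigma}_{ij}$ is
\[
D_t R_{ijkl} = \nabla_i\nabla_l S^{\sigma}_{jk} + \nabla_j\nabla_k S^{\sigma}_{il} - \nabla_i\nabla_k S^{\sigma}_{jl} - \nabla_j\nabla_l S^{\sigma}_{ik} - R_{pjkl}S^{\sigma}_{ip} - R_{ipkl}S^{\sigma}_{jp},
\]
which is purely of the form $\nabla\nabla S^{\sigma}$ plus $\Rm*S^{\sigma}$: there is no $\Delta_f\Rm$ or $\Rm*\Rm$ contribution. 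The reaction-diffusion identity $\partial_t\Rm = \Delta\Rm + \Rm*\Rm$ is a special feature of the ordinary Ricci flow, where $S=-\Rc$ lets the second Bianchi identity convert $\nabla\nabla\Rc$ into $\Delta\Rm$; it does not persist when $S^{\sigma} = \Rc + \nabla\nabla f + (\sigma/2)g$. The manipulations built on the wrong formula then fail to close. Your claim that $(\Rm, \Box_f S^{\sigma}*S^{\sigma})$ is pointwise bounded by $CK_0\|S^{\sigma}\|_\infty|S^{\sigma}|$ is false, since $\Box_f S^{\sigma}$ contains $\Delta_f S^{\sigma}$, a second-order quantity not controlled by the hypotheses. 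And in the $\nabla\nabla S^{\sigma}$ piece there is no separate $\Rm*g$ factor onto which to push the two derivatives — the only tensors present in the contraction with $S^{\sigma}\otimes S^{\sigma}$ are $\nabla\nabla S^{\sigma}$, $S^{\sigma}$, and $S^{\sigma}$ — so "transferring to $(\nabla\nabla(\Rm*g), S^{\sigma}\otimes S^{\sigma})$" is not an available move and would anyway produce an uncontrolled $\|\nabla\nabla\Rm\|$.

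With the correct formula, the estimate is simpler than you anticipated. Each of the four $\nabla\nabla S^{\sigma}$ terms, contracted with $S^{\sigma}\otimes S^{\sigma}$, requires exactly one integration by parts, moving the outer derivative onto the $S^{\sigma}\otimes S^{\sigma}$ factor. Choosing the derivative index contracted with one of the $S^{\sigma}$-slots, the divergence produced is exactly $\dv_f S^{\sigma}$, which vanishes (Lemma \ref{lem:divfv} plus the minimizing property of $f$), so that
\[
\int_M \nabla_i\nabla_l S^{\sigma}_{jk}\,S^{\sigma}_{il}S^{\sigma}_{jk}\,e^{-f}dV = -\int_M \nabla_l S^{\sigma}_{jk}\,S^{\sigma}_{il}\nabla_i S^{\sigma}_{jk}\,e^{-f}dV,
\]
bounded by $C\|S^{\sigma}\|_\infty\|\nabla S^{\sigma}\|^2$. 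The $\Rm*S^{\sigma}$ terms are pointwise $\leq CK_0|S^{\sigma}|^2$. No $\|\nabla\Rm\|$, $\|\nabla\nabla S^{\sigma}\|$, or $\|\Delta_f S^{\sigma}\|$ appears — precisely because the formula has no $\Delta_f\Rm$ piece and no $\Rm$ attached to the $\nabla\nabla S^{\sigma}$ part.
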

\begin{proof} Let $S = S^{\sigma}$. The first inequality follows directly from the pointwise identity
\[
 [D_t, \nabla_i]S_{jk} = S_{il}\nabla_{l}S_{jk} + (\nabla_j S_{il} - \nabla_l S_{ij})S_{lk}
            + (\nabla_k S_{il} - \nabla_l S_{ik})S_{jl}.
\]
For the second inequality, we first rewrite the identity
\[
 \pdt R_{ijkl} = \nabla_i\nabla_l S_{jk} + \nabla_j\nabla_k S_{il}  - \nabla_i\nabla_k S_{jl}  - \nabla_j\nabla_l S_{ik}
               +R_{ijpl}S_{pk} + R_{ijkp} S_{pl}
\]
in terms of the $D_t$ operator to obtain
\begin{align}\label{eq:dtrm}
\begin{split}
 D_t R_{ijkl} &= \nabla_i\nabla_l S_{jk} + \nabla_j\nabla_k S_{il}  - \nabla_i\nabla_k S_{jl}  - \nabla_j\nabla_l S_{ik}\\
        &\phantom{=} -R_{pjkl}S_{ip} - R_{ipkl} S_{jp}.
\end{split}
\end{align}
Integrating by parts and using that $\dv_f S \equiv 0$, we then see that 
\begin{align*}
 \int_M \nabla_i\nabla_l S_{jk} S_{il}S_{jk} e^{-f}\,dV 
 &= -\int_M\nabla_l S_{jk} \left((\dv_f S)_lS_{jk}  + S_{il}\nabla_iS_{jk}\right)e^{-f}\,dV\\
 &=-\int_M\nabla_l S_{jk} S_{il}\nabla_iS_{jk}e^{-f}\,dV, 
\end{align*}
and hence that
\[
 \left|\int_M\nabla_i\nabla_l S_{jk} S_{il}S_{jk} e^{-f}\,dV\right| \leq C\|S\|_{\infty}\|\nabla S\|^2,
\]
for some universal constant $C$. Estimating terms two through four in \eqref{eq:dtrm} similarly,
we obtain that
\[
 |(D_t\Rm(S), S)|\leq C\|S\|_{\infty}(\|\nabla S\|^2 + K_0\|S\|^2).
\]

Finally, for \eqref{eq:fest}, note that
\[
        \|\nabla S\|^2 - 2K_0E \leq F  \leq \|\nabla S\|^2 + 2K_0E.
\]
Combining \eqref{eq:jdef} with \eqref{eq:commest} and \eqref{eq:dtrmest} we then have
\begin{align*}
    |J(t)| &\leq 2|( [D_t, \nabla_i] S, \nabla_iS)| + 2|(D_t\Rm(S), S)| \\
    &\phantom{\leq}+2\|H\|_{\infty}(\|\nabla S\|^2 + (|\sigma|/2)\|S\|^2  + |(\Rm(S), S)|)\\
    &\leq C_0(\|H\|_{\infty} + \|S\|_{\infty})(C_1E(t) + F(t)),
\end{align*}
for some $C_i = C_i(K_0)$.
Together with \eqref{eq:fid}, this yields
\begin{align*}
    \dot{F}(t) &= -\frac{1}{2}\|\Lc_1 S\|^2 -E^2(t) +  (|S|^2 S ,S) +  J(t)\\
               &\leq  -\frac{1}{2}\|\Lc_1 S\|^2 + C_0(\|H\|_{\infty} + \|S\|_{\infty} )(C_1E(t) + F(t))
    \end{align*}
for some $C_i = C_i(K_0, K_1)$.
\end{proof}

\subsection{A differential inequality for the Dirichlet-Einstein quotient}
Now we combine Propositions \ref{prop:eev}, \ref{prop:fev}, and \ref{prop:jterms} to derive the key differential inequality for $N(t)$ along the modified flow.
\begin{proposition}\label{prop:nest}
 Suppose $g(t)$ is a smooth solution to \eqref{eq:mrf} on $M\times [0, T)$, $T \in (0, \infty]$ for which $E(t)\neq 0$ and 
\[
  \|\Rm(g(t))\|_{\infty} \leq K_0, \quad \|S^{\sigma}(g(t))\|_{\infty} \leq K_1.
\] 
 Then there are constants $C_i = C_i(K_0, K_1)$, $i=0$, $1$ such that
 \begin{equation}\label{eq:nest}
    \dot{N}(t) \leq C_0\left(\|H(t)\|_{\infty} + \|S^{\sigma}(g(t))\|_{\infty} \right)(N(t) + C_1)
 \end{equation}
 for all $0 \leq t < T$, where $N(t)$ is as defined in \eqref{eq:dequotientdef}.
\end{proposition}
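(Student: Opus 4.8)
The plan is to differentiate $N = F/E$ directly via the quotient rule and substitute the evolution identities established in Propositions~\ref{prop:eev}, \ref{prop:fev}, and \ref{prop:jterms}. Writing $S = S^{\sigma}$, the quotient rule gives $\dot N = \dot F/E - N\dot E/E$, and the structural feature I expect to be decisive is that the leading quadratic-in-$N$ contributions coming from the two terms on the right cancel exactly. This is the manifestation in our weighted setting of the mechanism underlying the Agmon--Nirenberg monotonicity argument.

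Concretely, I would first use the first line of \eqref{eq:eev1}, namely $\dot E = -2F - (HS, S)$, to write
\[
-N\,\frac{\dot E}{E} = 2N^2 + N\,\frac{(HS, S)}{E},
\]
and divide the estimate \eqref{eq:fest} by $E$ to obtain
\[
\frac{\dot F}{E} \leq -\frac{\|\Lc_1 S\|^2}{2E} + C_0\big(\|H\|_{\infty} + \|S\|_{\infty}\big)(C_1 + N).
\]
Summing these two relations yields $\dot N \leq 2N^2 - \|\Lc_1 S\|^2/(2E) + N(HS, S)/E + C_0(\|H\|_{\infty} + \|S\|_{\infty})(C_1 + N)$.

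The key step is then to absorb the $2N^2$ term. For this I would invoke the first identity in \eqref{eq:fid}, which reads $2F = -(\Lc_1 S, S)$; Cauchy--Schwarz then gives $2|F| \leq \|\Lc_1 S\|\,\|S\| = \|\Lc_1 S\|\,E^{1/2}$, hence $\|\Lc_1 S\|^2 \geq 4F^2/E$ and consequently
\[
-\frac{\|\Lc_1 S\|^2}{2E} \leq -\frac{2F^2}{E^2} = -2N^2.
\]
This cancels the $2N^2$ term exactly, leaving $\dot N \leq N(HS, S)/E + C_0(\|H\|_{\infty} + \|S\|_{\infty})(C_1 + N)$. To finish, I would bound $|(HS, S)| \leq \|H\|_{\infty}\|S\|^2 = \|H\|_{\infty} E$, so that $|(HS, S)/E| \leq \|H\|_{\infty}$, and use the curvature bound to note $F \geq \|\nabla S\|^2 - 2K_0 E \geq -2K_0 E$, i.e.\ $N \geq -2K_0$, whence $|N| \leq N + 4K_0$. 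Then $|N(HS, S)/E| \leq \|H\|_{\infty}(N + 4K_0)$, and combining these estimates gives \eqref{eq:nest} after enlarging the constants so that they depend on $K_0$ and $K_1$.

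I do not anticipate a serious obstacle: the delicate analytic work has already been carried out in Propositions~\ref{prop:fev} and \ref{prop:jterms} (and in the commutator identity of Lemma~\ref{lem:commidentity}), and what remains is essentially an algebraic assembly, using crucially the hypothesis $E(t) \neq 0$ so that $N$ is well-defined and $E$ may be divided out. The one point requiring a little care is that $N$ is not a priori nonnegative --- it is only bounded below by $-2K_0$ via the curvature assumption --- so the cross term $N(HS, S)/E$ must be absorbed into a quantity of the form $\|H\|_{\infty}(N + \mathrm{const})$ rather than simply $\|H\|_{\infty} N$; one should also double-check that the Cauchy--Schwarz step is oriented so that the $N^2$ cancellation runs in the favorable direction.
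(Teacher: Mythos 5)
Your proof is correct and follows essentially the same route as the paper: the paper writes $\dot N = (\dot F E - \dot E F)/E^2$ and observes that the leading contribution $(\Lc_1 S, S)^2 - \|\Lc_1 S\|^2\|S\|^2$ is nonpositive by H\"older, whereas you apply the equivalent Cauchy--Schwarz bound $\|\Lc_1 S\|^2 \geq 4F^2/E$ to cancel the $2N^2$ term directly. The treatment of the cross term $(HS,S)$ via the lower bound $N \geq -2K_0$ (equivalently, $|F| \leq F + 4K_0 E$) is also the same as in the paper.
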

\begin{proof} Let $S = S^{\sigma}(g(t))$. We will use $C_i$ to denote a sequence of positive constants depending only on $K_0$ and $K_1$.
On one hand, by \eqref{eq:fest}, we have
\begin{align}
\label{eq:fdote}
 \dot{F}E &\leq -\frac{1}{2}\|\Lc_1 S\|^2\|S\|^2 +  C_0(\|H\|_{\infty}+ \|S\|_{\infty})E(C_1E+F)
\end{align}
On the other, using \eqref{eq:eev1} and \eqref{eq:fid} and that
\[
    -|F|\geq -\|\nabla S\|^2 - 2K_0 E \geq -F - 4K_0E,
\]
we have
\begin{align}
\nonumber
\dot{E}F &= -\frac{1}{2}(\Lc_1 S, S)^2 - (HS, S)F \\
\nonumber
&\geq -\frac{1}{2}(\Lc_1 S, S)^2 - \|H\|_{\infty}E(F + 4K_0E)\\
\label{eq:edotf}
&\geq -\frac{1}{2}(\Lc_1 S, S)^2 - C_0\|H\|_{\infty}E(C_1E+ F).
\end{align}

Combining \eqref{eq:fdote} and \eqref{eq:edotf}, we obtain
\begin{align*}
\dot{N} &= \frac{\dot{F}E - \dot{E}F}{E^2} \leq \frac{(\Lc_1 S, S)^2 - \|\Lc_1 S\|^2\|S\|^2}{2E^2}
+  C_0(\|H\|_{\infty} + \|S\|_{\infty})(N + C_1),
\end{align*}
and \eqref{eq:nest} follows from H\"older's inequality.
\end{proof}

Now we use Proposition \ref{prop:nest} to obtain a general lower bound for $\|S^{\sigma}(g(t))\|$ and record a simple upper bound in passing. These bounds
in particular imply that a solution to \eqref{eq:mrf} which satisfies \eqref{eq:grs} on some time-slice must be static. Of course, the monotonicity
of $\mu_{\sigma}(g(t))$ already implies
that the future of such a solution is static; that its past is also static can also be deduced from either of \cite{KotschwarRFBU, KotschwarFrequency}.

\begin{proposition}\label{prop:moddecay}
Suppose $g(t)$ is a smooth solution to \eqref{eq:mrf} on $M\times [0, T)$
where $T\in (0, \infty]$. Assume that
\[
\|\Rm(g(t))\|_{\infty} \leq K_0, \quad \|S^{\sigma}(g(t))\|_{\infty} \leq K_1,
\]
and
\begin{equation}\label{eq:shl1}
 \int_0^{T}\left(\|S^{\sigma}(g(t))\|_{\infty} + \|H(t)\|_{\infty}\right)\,dt = \Lambda_0  <\infty.
\end{equation}
Then there are positive constants $N_0 = N_0(K_0, K_1, \Lambda_0, N(0))$ and $N_1 = N_0(K_0, \Lambda_0)$ where
\[
 N(0)  = \left\{\left.\left(\|\nabla S^{\sigma}\|^2 - 2\Rm\left(S^{\sigma}, S^{\sigma}\right)\right)\middle/ \|S^{\sigma}\|^2_{}\right\}\right|_{t=0},
\]
such that
\begin{equation}\label{eq:sdecay}
         e^{-N_0(t+1)}\|S^{\sigma}(g(0))\|^2_{g(0)} \leq  \|S^{\sigma}(g(t))\|_{g(t)}^2 \leq   e^{N_1(t+1)}\|S^{\sigma}(g(0))\|_{g(0)}^2
\end{equation}
for all $t\in [0, T)$. In particular, 
\begin{equation}\label{eq:mudecay}
 \mu_{\sigma}(g(s)) - \mu_{\sigma}(g(t)) \geq 2N_0^{-1}\|S^{\sigma}(g(0))\|^2_{g(0)}e^{-N_0(t+1)}\left(1 - e^{N_0(t-s)}\right) 
\end{equation}
for $0 \leq t \leq s <T$.
\end{proposition}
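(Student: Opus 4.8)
The plan is to integrate the differential inequality \eqref{eq:nest} for the Dirichlet-Einstein quotient $N(t)$, convert the resulting bound on $N$ into a bound on the logarithmic derivative of $E(t) = \|S^{\sigma}(g(t))\|^2$, and then integrate once more. First I would observe that the hypothesis \eqref{eq:shl1} forces $E(t)\neq 0$ on all of $[0,T)$: if $E$ vanished at some time, then by \eqref{eq:eev1} and the Gr\"onwall-type control coming from \eqref{eq:shl1} it would vanish identically, contradicting $E(0) > 0$ (and if $E(0)=0$ the stated inequalities hold trivially). So $N(t)$ is defined throughout. Applying Proposition \ref{prop:nest} and Gr\"onwall to \eqref{eq:nest} with the integral bound \eqref{eq:shl1}, I get
\[
 N(t) + C_1 \leq (N(0) + C_1)\exp\!\left(C_0\int_0^t (\|H\|_{\infty} + \|S^{\sigma}\|_{\infty})\,ds\right) \leq (N(0) + C_1)e^{C_0\Lambda_0},
\]
so $N(t)$ is bounded above by a constant $\tilde N_0 = \tilde N_0(K_0, K_1, \Lambda_0, N(0))$ uniformly on $[0,T)$. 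Also, from $F \geq \|\nabla S^{\sigma}\|^2 - 2K_0 E \geq -2K_0 E$ we get the lower bound $N(t) \geq -2K_0$, which gives the companion upper estimate in \eqref{eq:sdecay}.

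Next I would feed this into the evolution of $E$. By \eqref{eq:eev1}, $\dot E = -2F - (HS^{\sigma}, S^{\sigma}) = -2N E - (HS^{\sigma},S^{\sigma})$, so
\[
 \left|\frac{d}{dt}\log E(t)\right| = \left|\frac{\dot E}{E}\right| \leq 2|N(t)| + \|H(t)\|_{\infty} \leq 2\tilde N_0 + 4K_0 + \|H(t)\|_{\infty},
\]
using $|N| \leq \max\{\tilde N_0, 2K_0\} \leq \tilde N_0 + 2K_0$. Integrating from $0$ to $t$ and again invoking \eqref{eq:shl1},
\[
 |\log E(t) - \log E(0)| \leq (2\tilde N_0 + 4K_0)\,t + \Lambda_0 \leq N_0(t+1)
\]
for a suitable $N_0 = N_0(K_0, K_1, \Lambda_0, N(0))$ (absorbing $\Lambda_0$ into the $(t+1)$ factor); the same manipulation with the one-sided bound $N \geq -2K_0$ in place of $|N|$ and dropping the $-(HS^{\sigma},S^{\sigma})$ term appropriately gives the upper inequality with $N_1 = N_1(K_0, \Lambda_0)$. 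Exponentiating yields \eqref{eq:sdecay}.

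Finally, for \eqref{eq:mudecay} I would use \eqref{eq:muev}, i.e. $\frac{d}{dt}\mu_{\sigma}(g(t)) = 2E(t)$, together with the lower bound just established:
\[
 \mu_{\sigma}(g(s)) - \mu_{\sigma}(g(t)) = 2\int_t^s E(r)\,dr \geq 2\|S^{\sigma}(g(0))\|^2_{g(0)}\int_t^s e^{-N_0(r+1)}\,dr,
\]
and the elementary integral $\int_t^s e^{-N_0(r+1)}\,dr = N_0^{-1}e^{-N_0(t+1)}(1 - e^{N_0(t-s)})$ gives the claim. The main obstacle is the first step: one needs $N(t)$ to remain \emph{finite} — equivalently $E(t) > 0$ — on the whole interval before \eqref{eq:nest} can even be applied, and this non-vanishing has to be extracted from the structure of \eqref{eq:eev1} and the integrability hypothesis \eqref{eq:shl1} rather than assumed; everything after that is a matter of two Gr\"onwall integrations and bookkeeping of the constants.
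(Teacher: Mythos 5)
Your overall plan is the same as the paper's: bound $N(t)$ via Gr\"onwall applied to Proposition \ref{prop:nest}, feed the resulting bound into \eqref{eq:eev1} to control $\frac{d}{dt}\log E$, integrate again, and then integrate $\frac{d}{dt}\mu_{\sigma} = 2E$ for \eqref{eq:mudecay}. Steps two through five of your argument are correct and match the paper's proof essentially line for line, including the choice to absorb the $\Lambda_0$ term into the $(t+1)$ factor.

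The gap is in the first step, and it is a genuine one. You assert that \eqref{eq:eev1} together with \eqref{eq:shl1} and Gr\"onwall yield that $E(t_0) = 0$ for some $t_0$ forces $E \equiv 0$. But the only Gr\"onwall inequality available directly from \eqref{eq:eev1} at this stage is the one-sided bound $\dot{E} \leq (4K_0 + \|H\|_{\infty})E$, coming from $F \geq -2K_0 E$; this gives forward propagation of zeros (i.e.\ $E(t_0)=0 \Rightarrow E\equiv 0$ on $[t_0,T)$) but says nothing about $t < t_0$. A backward Gr\"onwall would require a bound of the form $\dot{E} \geq -CE$, which in turn requires controlling $\|\nabla S^{\sigma}\|^2/E$ --- exactly the quantity $N(t)$ whose finiteness you are trying to establish. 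So the claim as written is circular. The paper resolves this with a bootstrap: it first establishes the lower bound $E(t) \geq E(0)e^{-N_0(t+1)}$ \emph{assuming} $E > 0$ throughout (so that $N$ is defined), and only then considers the possibility that $E$ vanishes somewhere. Letting $b$ be the infimum of times at which $E$ vanishes, if $b > 0$ then $E > 0$ on $[0,b)$, the lower bound holds there, and sending $t \to b^-$ gives $E(b) \geq E(0)e^{-N_0(b+1)} > 0$, a contradiction; hence $b = 0$, $E(0) = 0$, and $E \equiv 0$ by the forward Gr\"onwall, which is the trivial case. You correctly flag the non-vanishing of $E$ as the crux at the end of your proposal, but the mechanism you cite --- Gr\"onwall from \eqref{eq:eev1} alone --- is not sufficient to deliver it; you need this two-stage argument.
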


\begin{proof} Let $S = S^{\sigma}(g(t))$.  For the upper bound in \eqref{eq:sdecay}, note that we have
\begin{align*}
    \dot{E}(t) &=-2F(t) - (HS, S) \leq (4K_0 + \|H(t)\|_{\infty})E(t)
\end{align*}
from \eqref{eq:eev1}, so $E(t) \leq E(0)\exp(4K_0t + \Lambda_0)$. 

For the lower bound, assume first that $E(t) > 0$ on $[0, T)$. Then $N(t)$ is well-defined and by Proposition \ref{prop:nest},
\[
 \dot{N}(t) \leq C_1(\|S(t)\|_{\infty} + \|H(t)\|_{\infty})(N(t) + C_2)
\]
for some $C_i = C_1(K_0, K_1)$, $i=1, 2$. Thus it follows from \eqref{eq:shl1} that 
\[
N(t) \leq N(t) + C_2 \leq e^{C_1\Lambda_0}(N(0) + C_2) \dfn \tilde{N}_0
\]
for all $t\in [0, T)$.
But, from \eqref{eq:eev1},
\[
    (\log E(t))^{\prime} \geq -2N(t) - \|H(t)\|_{\infty} \geq -2\tilde{N}_0 - \|H(t)\|_{\infty},
\]
so $E(t) \geq \exp(-2\tilde{N}_0t - \Lambda_0)E(0)$, and the lower bound in \eqref{eq:sdecay} is valid provided $E(t)\neq 0$.

Suppose then that $E(t_0) = 0$ for some $t_0 \in [0, T)$. We claim that $E(t) \equiv 0$. To see this, note that from the upper bound already proven, we must have $E(t) \equiv 0$ on $[t_0, T)$. Thus we may assume that $t_0$ is the infimum of all $b\geq 0$ for which $E(b) = 0$. If $b >0$, then applying the lower bound proven above to the interval $[0, b)$,
we see that $E(t) \geq e^{-N_0(t+1)}E(0)$ for all $t < b$, and obtain a contradiction sending $t$ to $b$. So we must have $b= 0$ and $E(t) \equiv 0$. The inequality
in \eqref{eq:sdecay} is trivial in this case.

Finally, \eqref{eq:mudecay} follows upon integrating $\mu_{\sigma}^{\prime}(g(t))= 2E(t)$ and applying the left inequality in \eqref{eq:sdecay}.
\end{proof}

\subsection{A remark on the Einstein case}\label{ssec:einsteincase}
It is possible to use a variation on Proposition \ref{prop:moddecay} to give a simpler proof of Theorem \ref{thm:decay} in the special case that 
$(M, \bar{g})$ is Einstein and the convergence of $g(t)$ to $\bar{g}$ is smooth, rather than sequential. In this case, one can work just with the normalized flow \eqref{eq:nrf} and use analogs of the computations above with $f \equiv 0$ to estimate $S^{\sigma}(g(t)) = \Rc(g(t)) + (\sigma/2)g(t)$. Then, if $\int_0^{\infty}\|S^{\sigma}(g(t))\|_{\infty}\,dt  <\infty$ (for example, if $\|g(t) - \bar{g}\|_{C^2_{\bar{g}}} < e^{-bt}$ for some $b$,) it follows that
\[
        \left\|S^{\sigma}(g(t))\right\|_{g(t)}^2 \geq C_0e^{-N_0t}\left\|S^{\sigma}(g(0))\right\|^2_{g(0)}
\]
for some $C_0$, $N_0 > 0$ where here $\|\cdot\|_{g(t)} = \|\cdot\|_{L^2_{g(t)}}$ are now unweighted $L^2$-norms. (Note that, here, $H =  R + \sigma n/2 = \operatorname{tr}_gS^{\sigma}$, so $|H(t)| \leq C|S^{\sigma}(g(t))|$.)

\section{Bounding the rate of convergence from above}

In this section, we convert Theorem \ref{thm:decay} into a problem for the modified Ricci flow. We will first show that for $t\gg 0$, we can pull-back our solution
to \eqref{eq:nrf} by a family of diffeomorphisms to obtain an immortal solution to \eqref{eq:mrf} which converges smoothly to a limit soliton as $t\to \infty$. 
We will assemble our proof out of the \L{}ojasiewicz arguments of
\cite{Ache, HaslhoferMueller, Kroencke, SunWang}, following closely to the template provided by those references. We then prove a refined version of Proposition \ref{prop:moddecay}
to bound the rate of convergence of the solution to the modified flow from above.

First, however, we record a lemma that will allow us to control $H(t)$ (as defined in equation \eqref{eq:hdef}) in terms of $S^{\sigma}(g(t))$ along \eqref{eq:mrf} near a soliton. We use a simple estimate on the lowest nontrivial eigenvalue of the weighted Laplacian
associated to a soliton that has been previously observed, e.g., in \cite{CaoZhu}, \cite{SunWang}. (When $\sigma > 0$
this control on $H(t)$ can be obtained directly from \eqref{eq:heq} and the maximum principle.) 
\begin{lemma}\label{lem:eigest} Let $(M, \bar{g}, \bar{f}, \sigma)$ be a compact soliton and $\alpha \in (0, 1)$. 
There is a regular $C^{2, \alpha}_{\bar{g}}$-neighborhood $U$ of $\bar{g}$ in $\mathcal{R}(M)$ and a constant $C = C(\alpha, \sigma, \bar{g})$
on which
\begin{equation}\label{eq:eigest}
            \|u\|_{C^{2, \alpha}_g} \leq C\|(\Delta_{f_g} - \sigma/2)u\|_{C^{0, \alpha}_g}
\end{equation}
for any $u$ satisfying $\int_M u e^{-f_g}\,dV_g = 0$.
\end{lemma}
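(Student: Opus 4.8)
\textbf{Proof strategy for Lemma \ref{lem:eigest}.}
The plan is to prove the estimate first at the soliton itself, where $f_g = \bar f$ and $\Delta_{f_g} = \Delta_{\bar f}$, and then propagate it to a neighborhood by a perturbation/continuity argument, using the analytic dependence of $f_g$ on $g$ from Theorem \ref{thm:ls}(a). At the base metric, the operator $L_{\bar g} = \Delta_{\bar f} - \sigma/2$ is self-adjoint with respect to the weighted inner product $(\cdot,\cdot)_{\bar g}$, and on a compact shrinking soliton the standard eigenvalue computation (as in \cite{CaoZhu}, \cite{SunWang}) shows that its spectrum, restricted to the subspace of functions $u$ with $\int_M u\, e^{-\bar f}\,dV_{\bar g} = 0$, is bounded away from $0$: the lowest eigenvalue of $-\Delta_{\bar f}$ on this subspace is at least $-\sigma/2 + c$ for some $c>0$ (when $\sigma=1$ one uses the soliton identity and Bochner; when $\sigma \le 0$ the gap is even more transparent, since $-\Delta_{\bar f}$ is nonnegative with trivial kernel on mean-zero functions and $-\sigma/2 \ge 0$). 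Hence $L_{\bar g}$ is invertible on the mean-zero subspace, and elliptic (Schauder) regularity for the weighted Laplacian gives
\[
\|u\|_{C^{2,\alpha}_{\bar g}} \le C\big(\|L_{\bar g} u\|_{C^{0,\alpha}_{\bar g}} + \|u\|_{C^0_{\bar g}}\big) \le C'\|L_{\bar g}u\|_{C^{0,\alpha}_{\bar g}}
\]
for all such $u$, the second inequality following from the spectral gap applied in $L^2(e^{-\bar f}dV_{\bar g})$ together with Sobolev embedding to upgrade the $L^2$ bound on $u$ to a $C^0$ bound controlled by $\|L_{\bar g}u\|_{C^{0,\alpha}_{\bar g}}$.

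Next I would transfer this to nearby metrics. Shrinking $U$ if necessary, Theorem \ref{thm:ls}(a) guarantees that $g \mapsto f_g$ is continuous (indeed analytic) as a map $U \to C^{k,\alpha}(M)$ for $k \ge 2$, so the coefficients of the operator $L_g = \Delta_{f_g} - \sigma/2$ — which involve $g$, $g^{-1}$, and $\nabla f_g$ — depend continuously on $g$ in the relevant Hölder norms. The constraint subspace also varies continuously: if $\int_M u\, e^{-f_g}\,dV_g = 0$, then the $\bar g$-weighted mean of $u$ is $O(\|g-\bar g\| + \|f_g - \bar f\|)\|u\|_{C^0}$, which is small. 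A routine perturbation argument then shows that the estimate \eqref{eq:eigest} persists with a constant $C = C(\alpha,\sigma,\bar g)$ uniform over a possibly smaller neighborhood $U$: one writes $L_g u = L_{\bar g}u + (L_g - L_{\bar g})u$, applies the base estimate (after correcting $u$ by a small constant to land in the $\bar g$-mean-zero subspace), and absorbs the error term $\|(L_g - L_{\bar g})u\|_{C^{0,\alpha}_{\bar g}} \le \epsilon(U)\|u\|_{C^{2,\alpha}_{\bar g}}$ into the left-hand side, using finally the equivalence of the $C^{k,\alpha}_g$ and $C^{k,\alpha}_{\bar g}$ norms for $g$ close to $\bar g$.

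The main obstacle is the uniformity of the constant across the neighborhood, which really amounts to two things: (i) establishing the spectral gap for $L_{\bar g}$ on the mean-zero subspace in the shrinking case $\sigma = 1$, where it is not automatic and requires the soliton structure (this is the point flagged by the parenthetical remark that for $\sigma>0$ one can alternatively argue via \eqref{eq:heq} and the maximum principle), and (ii) controlling the drift of the constraint subspace and of the operator coefficients as $g$ varies, which is where the analyticity of $g \mapsto f_g$ from Theorem \ref{thm:ls}(a) is essential — without it one could not even assert that $L_g$ is a well-defined continuous family. Both are standard once set up, but (i) is the only step with genuine geometric content; everything else is elliptic theory and perturbation.
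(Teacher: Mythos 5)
Your overall structure—establish a spectral gap for $L_{\bar g} = \Delta_{\bar f} - \sigma/2$ on the weighted mean-zero subspace at the soliton, then propagate by perturbation—is in the same spirit as the paper's proof, but you have the sign convention for $\sigma$ reversed, and this flips which case actually requires the soliton structure. In the paper's normalization \eqref{eq:nrf}, \eqref{eq:grs}, the value $\sigma = -1$ is the \emph{shrinking} case, $\sigma = 0$ is steady, and $\sigma = 1$ is expanding. What you need is $\lambda_{\bar g} > -\sigma/2$, where $\lambda_{\bar g} > 0$ is the lowest nonzero eigenvalue of $-\Delta_{\bar f}$ on mean-zero functions. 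When $\sigma \geq 0$ one has $-\sigma/2 \leq 0 < \lambda_{\bar g}$ for free; it is precisely when $\sigma = -1$, so that $-\sigma/2 = 1/2 > 0$, that the bound $\lambda_{\bar g} > 1/2$ is nontrivial and requires the weighted Bochner identity \eqref{eq:fbochner} together with $S^{\sigma}(\bar g, \bar f) = 0$ (the Cao--Zhu/Sun--Wang argument). Your parenthetical claiming the "$\sigma \leq 0$" case is transparent "since $-\Delta_{\bar f}$ is nonnegative ... and $-\sigma/2 \geq 0$" is exactly backwards: positivity of $-\sigma/2$ makes the required gap strict and quantitative, not free, and nonnegativity of $-\Delta_{\bar f}$ gives only $\lambda_{\bar g} > 0$, which does not clear a threshold of $1/2$. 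As written your argument fails in the shrinking case, which is the one that matters most in the context of this paper. (The remark in the paper about $\sigma > 0$ and the maximum principle flags an alternative shortcut in the expanding case, not the case that is difficult—you appear to have misread this.)

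Once the sign is corrected, your propagation step differs mildly in flavor from the paper's. The paper works with the eigenvalue itself: using Theorem \ref{thm:ls}(a), $\lambda_g$ depends continuously on $g$ through $f_g$, so one shrinks $U$ so that $\lambda_g + \sigma/2 \geq \tfrac{1}{2}(\lambda_{\bar g} + \sigma/2) > 0$ uniformly, after which the Schauder estimate is immediate. Your operator-perturbation route—writing $L_g u = L_{\bar g}u + (L_g - L_{\bar g})u$, shifting $u$ by a small constant into the $\bar g$-mean-zero subspace, and absorbing the error—is a legitimate alternative and yields the same uniform constant, at the cost of a bit more bookkeeping around the drifting constraint subspace (note in particular the case $\sigma = 0$, where $L_{\bar g}$ annihilates constants and the correction term must be tracked through the estimate rather than fed into the operator). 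Either route is fine; the eigenvalue-continuity argument is a little shorter.
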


\begin{proof} According to Theorem \ref{thm:ls}, there is a $C^{2, \alpha}_{\bar{g}}$-neighborhood $V$ of $\bar{g}$ in $\mathcal{R}(M)$ on which $f_g$ is well-defined
and depends continuously on $g$. On $V$, the lowest nonzero eigenvalue 
\[
    \lambda_g = \inf \bigg\{\, \int_{M}|\nabla u|^2\,e^{-f_g}\,dV_g \;\bigg|\; \int_M u^2\,e^{-f_g}\,dV_g = 1, \,  \int_M u\,e^{-f_g}\,dV_g = 0\, \bigg\}
\]
of $-\Delta_{f_g}$ depends continuously on $g$.   

At the same time, for any $g$ and $f$, there is the following weighted generalization of the Bochner formula
\begin{equation}\label{eq:fbochner}
\Delta_f|\nabla u|^2  = 2|\nabla\nabla u|^2 + 2S^{\sigma}(g, f)(\nabla u, \nabla u) -\sigma|\nabla u|^2 + 2\langle\nabla \Delta_{f} u, \nabla u\rangle. 
\end{equation}
For the soliton $\bar{g}$, $S^{\sigma}(\bar{g}, \bar{f}) = 0$, so, applied to an eigenfunction $\bar{u}$ of $-\Delta_{\bar{f}}$ and integrated over $(M, \bar{g})$ 
relative to $e^{-\bar{f}}dV_{\bar{g}}$, this formula implies (as in \cite{CaoZhu}, \cite{SunWang}) that $\lambda_{\bar{g}} > -\sigma/2$. 
Thus, there is some $C^{2, \alpha}_{\bar{g}}$ neighborhood $U\subset V$ on which 
\[
    \lambda_g + \frac{\sigma}{2} \geq \frac{1}{2}\left(\lambda_{\bar{g}} + \frac{\sigma}{2}\right) > 0
\]
for all $g\in U$, and the claim follows.
\end{proof}

\subsection{Passing to a solution to the modified flow}

The following statement is based on Lemma 3.2 of \cite{SunWang},  Theorem 1 of \cite{HaslhoferMueller}, and Theorem 7.3 of \cite{Kroencke} (see also Sections 2 and 3 of \cite{Ache}), where
the stability hypotheses on the soliton in \cite{HaslhoferMueller} and \cite{Kroencke} are replaced by an priori entropy bound on the solutions to \eqref{eq:nrf}.

\begin{proposition}[\cite{HaslhoferMueller},\cite{Kroencke}, \cite{SunWang}]\label{prop:modflow}
Let $(M, \bar{g})$ be a compact soliton. For any $k\geq 2$, $\alpha\in (0, 1)$, and $\epsilon\in (0, 1)$, there exist $\delta > 0$ and $T_0 > 0$ such that
if $g(t)$ is an immortal solution to \eqref{eq:nrf} with $\|g(0) - \bar{g}\|_{C^{k+2, \alpha}_{\bar{g}}} < \delta$ and  $\mu_{\sigma}(g(t)) \leq \mu_{\sigma}(\bar{g})$
for all $t \geq 0$, then  there is a smooth family $\phi_t\in \operatorname{Diff}(M)$ with $\phi_{T_0} = \operatorname{Id}$
such that $\tilde{g}(t) = \phi_t^*g(t)$ solves \eqref{eq:mrf} on $[T_0, \infty)$ and
\begin{enumerate}
 \item For all $t \geq T_0$, $\|\tilde{g}(t) - \bar{g}\|_{C^{k, \alpha}_{\bar{g}}} < \epsilon$.  
\item \label{it:sder} For each $l \geq 0$,
 \begin{equation*}
\int_{T_0}^{\infty}\|\tilde{\nabla}^{(l)}S^{\sigma}(\tilde{g}(t))\|_{\infty, \tilde{g}(t)}\,dt \leq C_l
\end{equation*}
for some $C_l$ depending on $\bar{g}$ and $\mu_{\sigma}(\bar{g}) - \mu_{\sigma}(g(0))$.
\item As $t\to \infty$, $\gt(t)$ converges smoothly to a soliton $g_{\infty}$ with $\|g_{\infty}-\bar{g}\|_{C^{k, \alpha}_{\bar{g}}} < \epsilon$
and $\mu_{\sigma}(g_{\infty}) = \mu_{\sigma}(\bar{g})$.
\end{enumerate}
\end{proposition}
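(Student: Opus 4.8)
The plan is to construct the family $\phi_t$ as a composition of a DeTurck-type gauge-fixing diffeomorphism with the diffeomorphisms that relate \eqref{eq:nrf} to \eqref{eq:mrf}, and then to run the \L{}ojasiewicz argument of \cite{Ache, HaslhoferMueller, Kroencke, SunWang} to obtain the decay estimates and smooth convergence. First I would invoke Theorem \ref{thm:ls} to fix a regular neighborhood: choose $\epsilon$ small enough that the $C^{k,\alpha}_{\bar g}$-ball $B_{2\epsilon}(\bar g)$ is regular, so that on it the minimizer $f_g$ is well-defined, analytic in $g$, and the \L{}ojasiewicz inequality \eqref{eq:lsineq1} holds with exponent $\theta\in[1/2,1)$ and constant $C_L$. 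Standard parabolic regularity for \eqref{eq:nrf} (interior-in-time Schauder estimates, using the a priori bound $\mu_\sigma(g(t))\le\mu_\sigma(\bar g)$ together with smallness of $g(0)-\bar g$) gives that, after waiting a definite time $T_0$, the solution $g(t)$ stays in $B_\epsilon(\bar g)$ in $C^{k,\alpha}_{\bar g}$ with uniform bounds on all higher derivatives; shrinking $\delta$ as needed, one keeps $g(t)\in B_\epsilon(\bar g)$ for as long as it remains there, and the argument below shows it never leaves.

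Next I would produce the gauge-fixing family. For $t\ge T_0$ one solves the harmonic-map-heat-flow (or the elliptic DeTurck equation) to obtain $\phi_t\in\Diff(M)$ with $\phi_{T_0}=\Id$ such that $\tilde g(t)=\phi_t^*g(t)$ is in Bianchi-type gauge relative to $\bar g$ and the potential $f_{\tilde g(t)}$; concretely, $\phi_t$ is chosen so that the only motion of $\tilde g(t)$ transverse to the $\Diff(M)$-orbit is accounted for, which forces $\tilde g(t)$ to solve the modified flow \eqref{eq:mrf}. This is exactly the content of Lemma 3.2 of \cite{SunWang}, Theorem 1 of \cite{HaslhoferMueller}, and Theorem 7.3 of \cite{Kroencke}, and I would cite those, checking only that the stability hypothesis used there can be replaced by the a priori entropy inequality $\mu_\sigma(g(t))\le\mu_\sigma(\bar g)$: this is what guarantees the \L{}ojasiewicz differential inequality closes. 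Item (1) then follows from the construction, possibly after enlarging $T_0$.

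For items (2) and (3) I would run the now-classical \L{}ojasiewicz energy argument on the modified flow. By \eqref{eq:muev}, $\frac{d}{dt}\mu_\sigma(\tilde g(t))=2\|S^\sigma(\tilde g(t))\|^2=2E(t)$, while \eqref{eq:lsineq1} gives $(\mu_\sigma(\bar g)-\mu_\sigma(\tilde g(t)))^\theta\le C_L\|S^\sigma(\tilde g(t))\|=C_L E(t)^{1/2}$. Setting $\rho(t)=\mu_\sigma(\bar g)-\mu_\sigma(\tilde g(t))\ge 0$, these combine to $-\frac{d}{dt}\rho^{1-\theta}\ge c\,E(t)^{1/2}$, so $\int_{T_0}^\infty E(t)^{1/2}\,dt<\infty$ with bound depending only on $\bar g$ and $\rho(T_0)\le\mu_\sigma(\bar g)-\mu_\sigma(g(0))$. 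Since $\tilde g(t)$ evolves by \eqref{eq:mrf} and hence $\|\partial_t\tilde g(t)\|=2E(t)^{1/2}\cdot(\text{const})$, this is a finite-length statement: $\tilde g(t)$ is Cauchy in $L^2$, and combined with the uniform higher-derivative bounds from parabolic regularity and interpolation, it converges smoothly to a limit metric $g_\infty$, which is necessarily a critical point of $\mu_\sigma$, i.e.\ a soliton, with $\mu_\sigma(g_\infty)=\mu_\sigma(\bar g)$ and $\|g_\infty-\bar g\|_{C^{k,\alpha}_{\bar g}}<\epsilon$. The derivative bounds in item (2) for $l\ge 1$ follow from the $l=0$ bound together with smoothing estimates of the type $\|\tilde\nabla^{(l)}S^\sigma(\tilde g(t))\|_\infty\le C_l\sup_{[t-1,t]}\|S^\sigma\|_{\infty}$ valid once the geometry is uniformly controlled, and then integrating.

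The main obstacle is the bookkeeping that makes all the smallness constants ($\delta$, $T_0$, $\epsilon$) mutually consistent: one must ensure that the gauge-fixing diffeomorphisms $\phi_t$ do not drive $\tilde g(t)$ out of the regular neighborhood before the \L{}ojasiewicz estimate can be invoked, which is a continuous-induction (open-closed) argument on the maximal interval on which $\tilde g(t)\in B_\epsilon(\bar g)$ — the finite-length bound coming from \eqref{eq:lsineq1} is precisely what prevents escape. A secondary technical point is that in the shrinking case the entropy minimizer $f_g$ need not be unique away from the regular neighborhood, so one must stay inside $U$ throughout; this is handled by the same induction. Since every ingredient is available in \cite{Ache, HaslhoferMueller, Kroencke, SunWang}, I would present this proof largely by reduction to those sources, spelling out only the substitution of the entropy bound for the stability hypothesis and the consolidation of the three cases into the single statement above.
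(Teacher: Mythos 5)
Your overall strategy --- fix a regular neighborhood via Theorem \ref{thm:ls}, gauge-fix a solution of \eqref{eq:nrf} to a solution of \eqref{eq:mrf}, run a \L{}ojasiewicz argument with an open--closed induction to prevent escape, and pass to the limit --- is the same as the paper's, and items (1) and (3) would follow from it. There are, however, two substantive issues, one of which is a genuine gap.

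The first is the description of the gauge-fixing family. The diffeomorphisms $\phi_t$ are not produced by harmonic-map heat flow or the elliptic DeTurck equation; instead, once $g(t)$ is regular one simply solves the ODE $\partial_t\phi = -\nabla f_{g(t)}\circ\phi$ with $\phi_{T_0}=\Id$, and then $\tilde g(t)=\phi_t^*g(t)$ solves \eqref{eq:mrf}. The DeTurck vector field (trace of the difference of Christoffel symbols against a fixed background) is a different object, and the Ricci--DeTurck flow is not the gradient flow of $\mu_\sigma$. This is a conceptual misstatement rather than a fatal error, since the sources you cite do carry out the correct construction, but as written the sentence asserting that Bianchi gauge forces \eqref{eq:mrf} is wrong.

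The genuine gap concerns item (\ref{it:sder}). You derive $-\tfrac{d}{dt}\rho^{1-\theta}\geq c\,\|S^\sigma\|$ (with $\|\cdot\|$ the weighted $L^2$ norm) and conclude $\int\|S^\sigma\|\,dt<\infty$. This suffices to make $\tilde g(t)$ Cauchy in $L^2$ and, together with uniform higher-derivative bounds and compactness, to get item (3); but it does not give $\int\|\tilde\nabla^{(l)}S^\sigma\|_\infty\,dt<\infty$. Passing from the $L^2$ norm of $\partial_t\tilde g$ to a $C^l$ norm by interpolation necessarily costs a power: one gets
$\|\partial_t\tilde g\|_{C^l}\leq C\|\partial_t\tilde g\|^{1-\gamma}_{L^2}\|\partial_t\tilde g\|^{\gamma}_{L^{2,N}}$
for some $\gamma\in(0,1)$, and the integral of $\|\partial_t\tilde g\|_{L^2}^{1-\gamma}$ is \emph{not} controlled by the integral of $\|\partial_t\tilde g\|_{L^2}$. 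This is precisely why the paper (following Sun--Wang) fixes $\beta\in(2-1/\theta,1)$ and derives the stronger inequality
$-\tfrac{d}{dt}Q^{1-(2-\beta)\theta}\geq C_0^{-1}\|S^\sigma\|^\beta$,
giving $\int\|S^\sigma\|^\beta\,dt<\infty$ with $\beta<1$. Combined with the interpolation $\|\partial_t\tilde g\|_{L^{2,p}}\leq C\|\partial_t\tilde g\|^\beta\|S^\sigma\|_{L^{2,N}}^{1-\beta}$ and Sobolev embedding, this yields $\|\partial_t\tilde g\|_{C^l}\leq C\|\partial_t\tilde g\|^\beta$ and item (2) follows by integration. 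Your proposed fallback --- a smoothing estimate $\|\tilde\nabla^{(l)}S^\sigma\|_\infty\leq C_l\sup_{[t-1,t]}\|S^\sigma\|_\infty$ followed by integration --- would at best reduce $l\geq 1$ to $l=0$, but you have not established the $l=0$ case either, since that already requires $\int\|S^\sigma\|_\infty\,dt<\infty$ rather than $\int\|S^\sigma\|_{L^2}\,dt<\infty$.
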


Up to a few small modifications, the proof follows those for similar assertions in \cite{Ache, HaslhoferMueller, Kroencke, SunWang}. 
We include the details here for completeness.  We will write
\[
 B^{l, \alpha}_{\epsilon} = \left\{\, g\in \mathcal{R}(M)\,\middle|\, \|g - \bar{g}\|_{C^{l, \alpha}_{\bar{g}}} < \epsilon\right\}.
\]

\begin{proof} First, using Theorem \ref{thm:ls} and reducing $\epsilon$ if needed, we may assume that $B^{k, \alpha}_{\epsilon}$ is a regular neighborhood of $\bar{g}$
on which the  \L{}ojasiewicz inequality
\begin{equation}\label{eq:lsineq}
    |\mu_{\sigma}(\bar{g}) - \mu_{\sigma}(g)|^{\theta} \leq C_{L}\|\nabla\mu_{\sigma}(g)\|
\end{equation}
is valid for some $\theta \in [1/2, 1)$. We will specify $\delta$ over the course of the proof.

Let $\bar{\phi}_t\in \operatorname{Diff}(M)$ be the one-parameter family generated by $\delb \bar{f}$. Then $\bar{\phi}_t^*\bar{g}$ solves \eqref{eq:nrf} on $(-\infty, \infty)$. 
Let $T_0 = T_0(\bar{g})$ be equal either to the first positive time that $\bar{\phi}_t^*\bar{g}\notin B_{\epsilon/8}^{k, \alpha}$, or to one, whichever is smaller. By the continuous dependence of the Ricci flow (see Theorem A of \cite{BahuaudGuentherIsenberg}) there is then a $\delta_0 = \delta_0(\epsilon, \bar{g}) > 0$ such that if $g(0) \in B_{\delta_0}^{k+2, \alpha}$, then 
$g(t) \in B_{\epsilon/4}^{k, \alpha}$ for $t\in [0, T_0]$. We will assume that $0< \delta \leq \delta_0$.

The metrics $g(t)$ will at least be regular for $t$ near $T_0$. Taking $\phi_t$ to be the solution to
\[
        \pd{\phi}{t} = -\nabla f_{g(t)} \circ \phi, \quad \phi_{T_0} = \operatorname{Id},
\]
the family of metrics $\tilde{g}(t) = \phi_t^*g(t)$ will solve the modified Ricci flow \eqref{eq:mrf} with $f_{\gt(t)} = f_{g(t)}\circ \phi_t$
for $t$ near $T_0$. The solution $\tilde{g}(t)$ can then be extended in a smooth and canonical fashion as long as $\tilde{g}(t)$ remains in a regular neighborhood of $\bar{g}$. Let $T\in (T_0, \infty]$ denote the maximal time of existence of this smooth extension and let $T_1 \in (T_0, T]$ denote the supremum
of those $T^*\geq T_0$ such that $\gt(t) \in B_{\epsilon/2}^{k, \alpha}$ for $t\in [T_0, T^*]$. 

Now, $g(t) \in B^{k, \alpha}_{\epsilon/4}$ for $t\in [0, T_0]$ and $\|\Rm(g(t)\|_{\infty, g(t)} = \|\Rm(\gt(t))\|_{\infty, \gt(t)}$
for $t\in [T_0, T)$. Thus there is some $K_0 = K_0(\bar{g})$ such that $\|\Rm(g(t))\|_{\infty, g(t)} \leq K_0$ for all $t\in [0, T_1)$. 
Standard derivative estimates for \eqref{eq:nrf} then imply that
\begin{equation}\label{eq:rmderest}
    \|\nabla^{(l)}\Rm(g(t))\|_{\infty, g(t)} \leq K_l, \quad t\in [T_0, T_1),
\end{equation}
for some $K_l = K_l(\bar{g})$, $l\geq 0$. But \eqref{eq:rmderest} is diffeomorphism invariant, so
\begin{equation*}
    \|\delt^{(l)}\Rm(\gt(t))\|_{\infty, \gt(t)} \leq K_l, \quad t\in [T_0, T_1)
\end{equation*}
as well. Since $B_{\epsilon}^{k, \alpha}$ is regular, we then have
\begin{equation*}
        \|\delt^{(l)}\ft\|_{\infty, \gt(t)} \leq K^{\prime}_l, \quad t\in [T_0, T_1),
\end{equation*}
for some constants $K^{\prime}_l = K^{\prime}_l(\bar{g})$ by elliptic regularity, and hence
\begin{equation}\label{eq:sderest}
 \|\delt^{(l)}S^{\sigma}(\tilde{g}(t))\|_{\infty, \gt(t)} \leq \tilde{K}_l, \quad t\in [T_0, T_1),
\end{equation}
for some constants $\tilde{K}_l = \tilde{K}_l(\bar{g})$ for all $l \geq 0$.

We claim that $T_1 = T = \infty$. To see this, suppose $T_1 < \infty$ and consider
\[
Q(t) = \mu_{\sigma}(\bar{g}) - \mu_{\sigma}(\tilde{g}(t)).
\]
On $[T_0, T_1)$, $Q(t)$ is monotone increasing and, since $\mu_{\sigma}(\tilde{g}(t)) = \mu_{\sigma}(g(t))$, we at least have $Q(t)\geq 0$.
However, if $Q(t_0) = 0$ for some $t_0$, then, by monotonicity, $Q(t) \equiv 0$
for all $t\geq t_0$ in which case $\gt(t_0)$ is itself a soliton in $B^{k, \alpha}_{\epsilon/2}$ and $\gt(t)$ is static for all $t\geq t_0$.
(In fact, using Proposition \ref{prop:moddecay} above, it is static for all $t\geq T_0$.) The assertions (1) - (3) are trivially true in this case so, going forward, we may
assume $Q(t) > 0$.

Now, following \cite{SunWang}, fix $\beta \in (2-1/\theta, 1)$ where $\theta$ is as in \eqref{eq:lsineq}. Then $0 < (2-\beta)\theta <  1$ and
\[
    -\frac{d}{dt} Q^{1-(2-\beta)\theta} = 2(1 - (2-\beta)\theta)\|S^{\sigma}(\gt(t))\|^2Q^{-(2-\beta)\theta},
\]
where $\|\cdot\|$ denotes the $L^2$-norm relative to $\gt(t)$ and $e^{-f_{\gt(t)}}dV_{\gt(t)}$.
By \eqref{eq:lsineq}, we have 
\[
Q^{-(2-\beta)\theta}\|S^{\sigma}(\gt(t))\|^{2-\beta}= Q^{-(2-\beta)\theta}\|\nabla\mu_{\sigma}(\tilde{g}(t))\|^{2-\beta} 
\geq C_{L},
\]
so
\[
 -\frac{d}{dt} Q^{1-(2-\beta)\theta} \geq C^{-1}_0\|S^{\sigma}(\gt(t))\|^{\beta},
\]
for some $C_0 = C_0(\beta, \theta, C_{L}) > 0$, and hence
\begin{equation}\label{eq:metdist}
 \int_{a}^{s} \|\partial_t\tilde{g}(t)\|^{\beta}\,dt \leq 2^{\beta}C_0\big\{(\mu_{\sigma}(\bar{g}) -\mu_{\sigma}(\tilde{g}(a)))^{\gamma} - (\mu_{\sigma}(\bar{g}) -\mu_{\sigma}(\tilde{g}(s)))^{\gamma}\big\}
\end{equation}
for $\gamma= 1-(2-\beta)\theta$ and any $T_0 \leq a \leq s < T_1$.

On the other hand, using interpolation inequalities for tensors (see, e.g., \cite{Hamilton3D}, Corollary 12.7) and the estimates 
\eqref{eq:sderest} for $\delt^{l}S^{\sigma}(\gt(t))$, for any $p \geq 1$ we have
\begin{align*}
 \|\partial_t \tilde{g}(t)\|_{L^{2, p}_{\gt(t)}} &\leq C(\beta, p)\|\partial_t \tilde{g}(t)\|^{\beta}_{L^{2}_{\gt(t)}}
 \|S^{\sigma}(\gt(t))\|_{L^{2, N}_{\gt(t)}}^{1 -\beta}
 \leq C(\beta, p, \bar{g}) \|\partial_t \tilde{g}(t)\|^{\beta}
\end{align*}
for some $N = N(p)$. Since the Sobolev constants of metrics in $B^{k, \alpha}_{\epsilon}$ are uniformly controlled, we have 
$\|\partial_t \tilde{g}(t)\|_{C^l_{\tilde{g}(t)}} \leq C(\beta, l, \bar{g})\|\partial_t \tilde{g}(t)\|^{\beta}$ for any $l$.  Thus returning
to \eqref{eq:metdist}, we find that, for any $l\geq 0$ and $T_0 \leq a \leq s < T_1$,
\begin{align}
\begin{split}\label{eq:metdist2}
 \|\tilde{g}(s)- \tilde{g}(a)\|_{C^{l}_{\bar{g}}} &\leq C(l)\int_{a}^{s} \|\partial_t \tilde{g}(t)\|_{C^l_{\bar{g}}}\,dt \leq C(l, \bar{g})\int_{a}^{s}\|\partial_t \tilde{g}(t)\|_{C^l_{\tilde{g}(t)}} \,dt\\  
 &\leq C_1(\mu_{\sigma}(\bar{g}) -\mu_{\sigma}(\tilde{g}(a)))^{\gamma} - C_1(\mu_{\sigma}(\bar{g}) -\mu_{\sigma}(\tilde{g}(s)))^{\gamma}\\
 &\leq C_1(\mu_{\sigma}(\bar{g}) - \mu_{\sigma}(g(0)))^{\gamma}
\end{split}
 \end{align}
for some $C_1 = C_1(\beta, l, \bar{g}, C_{L})$.

There is $\delta_1 = \delta_1(\bar{g}) > 0$ such that if $\|g - \bar{g}\|_{C^2_{\bar{g}}} < \delta_1$, then $|\mu_{\sigma}(g)- \mu_{\sigma}(\bar{g})| < \epsilon/(8C_1)$.
Provided that $\delta$ is chosen that $0 < \delta \leq \min\{\delta_0, \delta_1\}$, we have
\begin{equation}\label{eq:metdist3}
 \|\gt(t) - \bar{g}\|_{C^{k, \alpha}_{\bar{g}}} \leq \|\gt(T_0) - \bar{g}\|_{C^{k, \alpha}_{\bar{g}}} + \|\gt(t) - \gt(T_0)\|_{C^{k, \alpha}_{\bar{g}}} < \frac{3\epsilon}{8},
\end{equation}
for $t\in [T_0, T_1)$. Now, $\mu_{\sigma}(\gt(t)) \to \mu_{\sigma}(g(T_1))$ as $t\to T_1$, so \eqref{eq:metdist2} implies that $\gt(t)$ converges to a smooth metric $\gt(T_1)$ as $t\to T_1$, and \eqref{eq:metdist3} implies that $\gt(T_1) \in B_{\epsilon/2}^{k, \alpha}$. Thus $\gt(t)$ can be extended to a smooth solution to \eqref{eq:mrf}  which
belongs to $B_{\epsilon/2}^{k, \alpha}$  for $t\in [T_0, T^*)$ with some $T^* > T_1$, contradicting the definition of $T_1$. So $T_1 = T = \infty$ and  $\gt(t)$ never exits $B_{\epsilon/2}^{k, \alpha}$.

The estimate \eqref{eq:metdist2} then implies that $\tilde{g}(t)$ converges to a smooth metric $g_{\infty}$ in $B_{\epsilon}^{k, \alpha}$ as $t\to\infty$. Since $S^{\sigma}(\gt(t))\to 0$, $\nabla\mu_{\sigma}(g_{\infty}) = 0$ and
$g_{\infty}$ is a soliton which, in view of \eqref{eq:lsineq}, must satisfy $\mu_{\sigma}(g_{\infty}) = \mu_{\sigma}(\bar{g})$. The estimates on $S^{\sigma}(\gt(t))$ in (\ref{it:sder})
follow from \eqref{eq:metdist2} with $s= \infty$.
\end{proof}

The proof of Proposition \ref{prop:modflow} shows that near a soliton $\bar{g}$, the speed $S^{\sigma}(g(t))$ of a solution to the modified flow and all of its covariant derivatives can potentially be controlled by the entropy. We combine this with Lemma \ref{lem:eigest} to bound the rate of convergence of the entropy from above along the modified flow.
\begin{proposition}\label{prop:moddecay2}
Let $(M, \bar{g})$ be a compact soliton and $g(t)$ an immortal solution to \eqref{eq:mrf} which converges smoothly to $\bar{g}$.
Then there are $C_0$, $N_0 > 0$ such that
\begin{equation}\label{eq:mudecay2}
\mu_{\sigma}(\bar{g}) - \mu_{\sigma}(g(t)) \geq C_0\|\nabla \mu_{\sigma}(g(0))\|^2_{g(0)}e^{-N_0t}
\end{equation}
for all $t\geq 0$.
\end{proposition}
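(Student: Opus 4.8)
The plan is to bootstrap the smooth convergence hypothesis into the integrability condition \eqref{eq:shl1} required by Proposition \ref{prop:moddecay}, and then to upgrade the resulting lower bound on $\|S^{\sigma}(g(t))\|$ into the stated lower bound on the entropy defect $\mu_{\sigma}(\bar{g}) - \mu_{\sigma}(g(t))$ via the \L{}ojasiewicz inequality \eqref{eq:lsineq1}. Since $g(t)$ is immortal and converges smoothly to $\bar{g}$, for $t \geq a_0$ (some large $a_0$) the metrics $g(t)$ lie in a regular neighborhood $U$ of $\bar{g}$ on which Theorem \ref{thm:ls} and Lemma \ref{lem:eigest} apply, and on which the curvature and its derivatives are uniformly bounded; hence $\|\Rm(g(t))\|_{\infty} \leq K_0$, and smooth convergence gives $\|S^{\sigma}(g(t))\|_{\infty} \leq K_1$ and $S^{\sigma}(g(t)) \to 0$ in every $C^k$. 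The first task is to produce the finiteness of $\int_{a_0}^{\infty}(\|S^{\sigma}(g(t))\|_{\infty} + \|H(t)\|_{\infty})\,dt$. For $\|H\|_{\infty}$, Lemma \ref{lem:eigest} applied to $u = H(t)$ (which has zero weighted average by \eqref{eq:heq}) together with the identity $(\Delta_{f} - \sigma/2)H = \|S^{\sigma}\|^2 - |S^{\sigma}|^2$ from \eqref{eq:heq} immediately bounds $\|H(t)\|_{C^{2,\alpha}} \leq C\||S^{\sigma}|^2 - \|S^{\sigma}\|^2\|_{C^{0,\alpha}} \leq C'\|S^{\sigma}(g(t))\|^2_{C^{0,\alpha}_{g(t)}}$, which is controlled by $\|S^{\sigma}(g(t))\|_{\infty}$ and a fixed derivative bound; so it suffices to show $\int_{a_0}^{\infty}\|S^{\sigma}(g(t))\|_{\infty}\,dt < \infty$. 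This is exactly the content of item (\ref{it:sder}) of Proposition \ref{prop:modflow} — or, more directly, can be rederived here by running the \L{}ojasiewicz--time-integral estimate \eqref{eq:metdist}: with $\beta \in (2 - 1/\theta, 1)$ and $Q(t) = \mu_{\sigma}(\bar{g}) - \mu_{\sigma}(g(t)) \geq 0$ (nonnegative since $\mu_{\sigma}$ is increasing along the flow and converges to $\mu_{\sigma}(\bar{g})$), one gets $\int_{a_0}^{\infty}\|\partial_t g\|^{\beta}\,dt < \infty$, and then interpolation against the uniform higher-derivative bounds on $S^{\sigma}(g(t))$ promotes this to $\int_{a_0}^{\infty}\|S^{\sigma}(g(t))\|_{\infty}\,dt < \infty$.

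With \eqref{eq:shl1} in hand on $[a_0, \infty)$, Proposition \ref{prop:moddecay} (applied with initial time $a_0$ in place of $0$) yields constants $N_0, C$ such that
\[
\|S^{\sigma}(g(t))\|_{g(t)}^2 \geq C e^{-N_0 t}\|S^{\sigma}(g(a_0))\|_{g(a_0)}^2, \qquad t \geq a_0.
\]
I then want to propagate this back to $t = 0$ and re-express it in terms of $\|\nabla\mu_{\sigma}(g(0))\|$. On the compact interval $[0, a_0]$ the quantity $\|S^{\sigma}(g(t))\|^2$ is continuous and, by the upper bound in \eqref{eq:sdecay} (or simply \eqref{eq:eev1} and Gronwall), comparable to $\|S^{\sigma}(g(0))\|^2$ up to a constant depending only on the fixed data on $[0,a_0]$; hence $\|S^{\sigma}(g(a_0))\|^2 \geq c_0 \|S^{\sigma}(g(0))\|^2$ for some $c_0 > 0$ (if $S^{\sigma}(g(0)) = 0$ then $g(t) \equiv \bar{g}$ by Proposition \ref{prop:moddecay} and \eqref{eq:mudecay2} holds trivially, so we may assume $\|S^{\sigma}(g(0))\| > 0$). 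Since $\|\nabla\mu_{\sigma}(g)\| = \|S^{\sigma}(g)\|$ whenever $f_g$ is the entropy minimizer, this gives $\|S^{\sigma}(g(t))\|^2 \geq C_0' \|\nabla\mu_{\sigma}(g(0))\|^2_{g(0)} e^{-N_0 t}$ for all $t \geq a_0$, after absorbing $e^{-N_0 a_0}$ into the constant; and on $[0, a_0]$ the same inequality holds (shrinking $C_0'$) by the lower bound in \eqref{eq:sdecay} again.

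Finally, I convert the lower bound on $\|S^{\sigma}(g(t))\| = \|\nabla\mu_{\sigma}(g(t))\|$ into the claimed bound on $Q(t) = \mu_{\sigma}(\bar{g}) - \mu_{\sigma}(g(t))$. Since $\dot{Q}(t) = -2\|S^{\sigma}(g(t))\|^2 \leq -2 C_0' \|\nabla\mu_{\sigma}(g(0))\|^2 e^{-N_0 t}$, integrating from $t$ to $\infty$ and using $Q(\infty) = 0$ gives
\[
\mu_{\sigma}(\bar{g}) - \mu_{\sigma}(g(t)) = Q(t) = \int_t^{\infty}\big(-\dot{Q}(s)\big)\,ds \geq 2 C_0' \|\nabla\mu_{\sigma}(g(0))\|^2_{g(0)} \int_t^{\infty} e^{-N_0 s}\,ds = \frac{2C_0'}{N_0}\|\nabla\mu_{\sigma}(g(0))\|^2_{g(0)} e^{-N_0 t},
\]
which is \eqref{eq:mudecay2} with $C_0 = 2C_0'/N_0$. (Alternatively one reads this off directly from \eqref{eq:mudecay} by letting $s \to \infty$ and $t \to 0$, with the $[0,a_0]$ comparison folded in.) The main obstacle is the first step: establishing the time-integrability \eqref{eq:shl1}, i.e. that smooth (not just sequential) convergence forces $\int^{\infty}\|S^{\sigma}(g(t))\|_{\infty}\,dt < \infty$. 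This is not automatic from decay of $\|S^{\sigma}(g(t))\|$ alone and genuinely requires the \L{}ojasiewicz inequality together with the uniform higher-derivative estimates on $S^{\sigma}(g(t))$ valid in a regular neighborhood; once that is secured, the remainder is bookkeeping with Gronwall on a compact interval and elementary integration.
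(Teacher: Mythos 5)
Your proof is correct and follows essentially the same route as the paper's: establish the time-integrability condition \eqref{eq:shl1} (via Lemma \ref{lem:eigest} for $H$ and the \L{}ojasiewicz argument for $S^{\sigma}$), then invoke Proposition \ref{prop:moddecay}. The only presentational differences are that you apply Proposition \ref{prop:moddecay} from an intermediate time $a_0$ and patch in the compact interval $[0,a_0]$ by a Gronwall comparison, whereas the paper applies it directly over $[0,\infty)$ and simply sends $s\to\infty$ in \eqref{eq:mudecay}; and your bound on $\|H\|_{\infty}$ comes out quadratic in $\|S^{\sigma}\|_{C^{0,\alpha}}$, which reduces to the paper's linear estimate $\|H\|_{\infty}\leq C\|S^{\sigma}\|_{C^1}$ under the uniform bounds available in a regular neighborhood. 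Neither difference is substantive.
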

\begin{proof}
 Fix $k\geq 2$ and choose $0< \epsilon <1$ so small that $B_{\epsilon}^{k, \alpha}$ is a regular neighborhood of $\bar{g}$ on which the conclusions of Theorem \ref{thm:ls}
 and Lemma \ref{lem:eigest} are valid. Let $a\in [0, \infty)$ be such that $g(t) \in U$ for all $t\geq a$. Then there are constants $K_l = K_l(\bar{g})$ such that
 \[
        \|\nabla^{l}\Rm(g(t))\|_{\infty} \leq K_l, \quad t\in [a+1, \infty), 
 \]
 and, thus, as in the proof of Proposition \ref{prop:modflow}, constants $\tilde{K}_l = \tilde{K}_l(\bar{g})$ such that
 \[
     \|\nabla^{l}S^{\sigma}(g(t))\|_{\infty} \leq \tilde{K}_l, \quad t\in [a+1, \infty).
 \]
Under our assumptions, $\mu_{\sigma}(g(t)) \nearrow \mu_{\sigma}(\bar{g})$ and $\mu_{\sigma}(g(t)) < \mu_{\sigma}(\bar{g})$
unless $g(t)$ is static, in which case $g(t) \equiv \bar{g}$. As in the case of Proposition \ref{prop:modflow}, we then have 
\[
        \int_{a+1}^{\infty}\|S^{\sigma}(g(t))\|_{C^1_{g(t)}}\,dt \leq C_1(\mu_{\sigma}(\bar{g}) - \mu_{\sigma}(g(0)))^{\gamma}.
\]

Using \eqref{eq:heq} and Lemma \ref{lem:eigest}, we obtain a constant $C_2 > 0$ such that $\|H(t)\|_{\infty} \leq C_2\|S^{\sigma}(g(t))\|_{C^1_{g(t)}}$
for $t\in [a, \infty)$. Hence
\[
  \Lambda_0 = \int_0^{\infty}(\|H(t)\|_{\infty} + \|S^{\sigma}(g(t))\|_{\infty})\,dt < \infty,
\]
and \eqref{eq:mudecay2} follows from Proposition \ref{prop:moddecay} upon sending $s\to \infty$ in \eqref{eq:mudecay}.
\end{proof}

Now we combine Propositions \ref{prop:modflow} and \ref{prop:moddecay2} to prove Theorem \ref{thm:decay}.
\begin{proof}[Proof of Theorem \ref{thm:decay}]
Since $\mu_{\sigma}(g(t_i)) = \mu_{\sigma}(\phi_i^*g(t_i)) \to \mu_{\sigma}(\bar{g})$ as $i\to\infty$ and $\mu_{\sigma}(g(t))$ is monotone increasing, we have  
$\mu_{\sigma}(g(t)) \nearrow \mu_{\sigma}(\bar{g})$ as $t\to \infty$. Suppose
\begin{equation*}
    \liminf_{i\to \infty} e^{mt_i}\|\phi_i^*g(t_i) - \bar{g}\|_{C^2_{\bar{g}}} = 0
\end{equation*}
for all $m\geq 0$.
Then,
 \begin{equation}\label{eq:mudecay3}
  \liminf_{i\to\infty} e^{mt_i}(\mu_{\sigma}(\bar{g}) - \mu_{\sigma}(g(t_i))) = 0
 \end{equation}
for all $m\geq 0$.

 Fix $k\geq 2$, $\alpha\in (0, 1)$, and let $V = B_{\epsilon}^{k, \alpha}$ be a regular neighborhood of $\bar{g}$ on which the conclusions of Theorem \ref{thm:ls} are valid.  Then choose $U = B_{\delta}^{k+2, \alpha}\subset V$ as in Proposition \ref{prop:modflow}. By our assumptions, there is $I$ such that $\phi_i^*g(t_{i})\in U$ for all $i \geq I$. 
Then $\hat{g}(t) = \phi_{I}^*g(t)$ solves \eqref{eq:nrf} for $t \in [0, \infty)$ and $\hat{g}(t_{I})\in U$. 

By Proposition \ref{prop:modflow}, there is an $a\geq t_I$ and
a family of diffeomorphisms $\psi_t$ defined for $t\geq a$ with $\psi_{a} = \operatorname{Id}$, such that $\tilde{g}(t) = \psi_t^*\hat{g}(t)$ solves the modified flow \eqref{eq:mrf}, belongs to $V$ for all $t\geq a$, and 
converges smoothly as $t\to \infty$ to a limit soliton $g_{\infty}$ in $V$ with $\mu_{\sigma}(g_{\infty}) = \mu_{\sigma}(\bar{g})$.

Moreover, by Proposition \ref{prop:moddecay2}, there are constants $C_0$ and $N_0 > 0$ such that
\[
    \mu_{\sigma}(g_{\infty}) - \mu_{\sigma}(\tilde{g}(t)) \geq C_0\|\nabla\mu_{\sigma}(\tilde{g}(a))\|^2_{\tilde{g}(a)}e^{-N_0(t-a)}.
\]
But $\tilde{g}(t) = (\phi_{t_I}\circ\psi_t)^*g(t)$, so $\mu_{\sigma}(\tilde{g}(t)) = \mu_{\sigma}(g(t))$
and 
\[
\mu_{\sigma}(\bar{g}) - \mu_{\sigma}(g(t)) \geq C_0\|\nabla\mu_{\sigma}(\tilde{g}(a))\|^2_{\tilde{g}(a)}e^{-N_0(t-a)}
\]
for $t \geq a$. Invoking \eqref{eq:mudecay3} with $m > N_0$, we see that
\[
\|\nabla\mu_{\sigma}(\tilde{g}(a))\|_{\tilde{g}(a)} = 0,
\]
so $\tilde{g}(a) = \phi_{t_I}^*g(a)$ is a soliton satisfying \eqref{eq:grs}.  

Now, the modified flow beginning from a Ricci soliton is static, so $\tilde{g}(t) = \tilde{g}(a)$ for all $t \geq a$. In particular,
$g_{\infty} = \tilde{g}(a) = \phi_{t_I}^*g(a)$. But then $g(a)= \left(\phi_{t_I}^{-1}\right)^*g_{\infty}$ is also a soliton, and so (e.g., by Lemma 4.3 in \cite{KotschwarRFBU})
$g(t)$ is self-similar on $[0, a]$ and hence on all of $[0, \infty)$.
Thus $g(t) = \varphi_t^* g(0)$ for some family $\varphi_t$ of diffeomorphisms.

Writing $\gamma_i = \varphi_{t_i} \circ \phi_i$, our assumptions imply that $\gamma_i^*g(0)$ converges smoothly to $\bar{g}$ as $i\to \infty$. We may then
extract a subsequence $\gamma_{i_k}$ converging to an isometry $\gamma_{\infty}:(M, g(0)) \to (M,\bar{g})$. Then $(\varphi_t^{-1}\circ \gamma_{\infty})^*g(t) = \bar{g}$ for all $t$.
 \end{proof}

\end{document}